\definecolor{vegasgold}{rgb}{0.77, 0.7, 0.35}
\definecolor{darkgoldenrod}{rgb}{0.72, 0.53, 0.04}
\definecolor{gold(metallic)}{rgb}{0.83, 0.69, 0.22}
\DeclareFontFamily{U}{wncy}{}
\DeclareFontShape{U}{wncy}{m}{n}{<->wncyr10}{}
\DeclareSymbolFont{mcy}{U}{wncy}{m}{n}
\DeclareMathSymbol{\Sh}{\mathord}{mcy}{"58}
\newtheorem{theorem}{Theorem}[section]
\newtheorem{lemma}[theorem]{Lemma}
\newtheorem*{theorem*}{Theorem}
\newtheorem*{ass*}{Assumption}
\newtheorem{definition}[theorem]{Definition}
\newtheorem{remark}[theorem]{Remark}
\newtheorem{proposition}[theorem]{Proposition}
\newcommand{\cF}{\mathcal{F}}
\newcommand{\pdiv}{\mid\!\mid}
\newcommand{\cH}{\mathcal{H}}
\newcommand{\Z}{\mathbb{Z}}
\newcommand{\p}{\mathfrak{p}}
\newcommand{\Q}{\mathbb{Q}}
\newcommand{\F}{\mathbb{F}}
\newcommand{\cL}{\mathcal{L}}
\newcommand{\cO}{\mathcal{O}}
\newcommand{\op}[1]{\operatorname{#1}}
\newcommand\mtx[4] { \left( {\begin{array}{cc}
 #1 & #2 \\
 #3 & #4 \\
 \end{array} } \right)}
\numberwithin{equation}{section}
\begin{document}

\title[Constructing Galois representations with prescribed $\lambda$-invariant]{Constructing Galois representations with prescribed Iwasawa $\lambda$-invariant}

\author[A.~Ray]{Anwesh Ray}
\address[Ray]{Chennai Mathematical Institute, H1, SIPCOT IT Park, Kelambakkam, Siruseri, Tamil Nadu 603103, India}
\email{anwesh@cmi.ac.in}

\keywords{Iwasawa theory, modular forms, Selmer groups, Galois representations, Bloch-Kato Selmer groups}
\subjclass[2020]{11R23, 11F80 (primary); 11F11, 11R18 (secondary) }

\maketitle

\begin{abstract}
Let $p\geq 5$ be a prime number. We consider the Iwasawa $\lambda$-invariants associated to modular Bloch-Kato Selmer groups, considered over the cyclotomic $\mathbb{Z}_p$-extension of $\mathbb{Q}$. Let $g$ be a $p$-ordinary cuspidal newform of weight $2$ and trivial nebentype. We assume that the $\mu$-invariant of $g$ vanishes, and that the image of the residual representation associated to $g$ is suitably large. We show that for any number $n$ greater than or equal to the $\lambda$-invariant of $g$, there are infinitely many newforms $f$ that are $p$-congruent to $g$, with $\lambda$-invariant equal to $n$. We also prove quantitative results regarding the levels of such modular forms with prescribed $\lambda$-invariant.
\end{abstract}

\section{Introduction}

\par The Iwasawa theory of Selmer groups associated to Galois representations captures significant arithmetic information about motives. Let $p$ be a prime number. Given an elliptic curve $E_{/\Q}$, and a number field extension $F/\Q$, the $p$-primary Selmer group of $E$ over $F$ is of fundamental importance and captures information about the Mordell-Weil group $E(F)$ and the $p$-primary part of the Tate-Shafarevich group $\Sh(E/F)$. The fundamental object of study in the Iwasawa theory of elliptic curves is the $p$-primary Selmer group over the cyclotomic $\Z_p$-extension, denoted $\op{Sel}_{p^\infty}(E/\Q_{\op{cyc}})$. Mazur \cite{mazur1972rational} initiated the Iwasawa theory of elliptic curves $E$ with good ordinary reduction at $p$, and associated structural invariants associated with these Selmer groups.

\subsection{Main results} In this paper we consider $\lambda$-invariants associated to Bloch-Kato Selmer groups attached to modular Galois representations. We prove certain qualitative and quantitative results about the levels of modular forms that arise in natural families, for which the $\lambda$-invariant is prescribed to be a fixed value. We fix a cuspidal Hecke newform $g$ of weight $2$ on $\Gamma_0(N_g)$. Associated with a fixed choice of embedding $\iota_p:\bar{\Q}\hookrightarrow \bar{\Q}_p$, let $\rho_g$ be the associated Galois representation. It is assumed that the image of the residual representation $\bar{\rho}_g:\op{Gal}(\bar{\Q}/\Q)\rightarrow \op{GL}_2(\bar{\F}_p)$ is up to conjugation, equal to $\op{GL}_2(\F_p)$. When we assume that $p$ is odd and $g$ is $p$-ordinary, the Iwasawa $\mu$ and $\lambda$-invariants are well defined, and denoted by $\mu_p(g)$ and $\lambda_p(g)$ respectively (cf. section \ref{s 3.2}).

\begin{theorem}\label{main thm 1}
   Let $p\geq 5$ be a prime and let $g$ be a normalized newform of weight $2$ on $\Gamma_0(N_g)$. We assume the following conditions

\begin{enumerate} 
\item We shall assume that the image of the residual representation $\bar{\rho}_g$ lies in $\op{GL}_2(\F_p)$. Moreover, the Galois representation
$\bar{\rho}_g:\op{Gal}(\bar{\Q}/\Q)\rightarrow \op{GL}_2(\F_p)$ is surjective.
\item The modular form $g$ is $p$-ordinary and $p\nmid N_g$,
\item $g$ has optimal level, i.e., $N_g$ is the prime to $p$ part of the Artin conductor of the residual representation,
\item $\mu_p(g)=0$.
\end{enumerate}
Associated with $g$, let $\Pi_g$ be the set of prime numbers $\ell$ such that the following conditions are satisfied:
\begin{enumerate}
    \item $\ell\nmid N_g p$, 
    \item $\ell\not\equiv \pm 1\mod{p}$ and $\ell^{p-1}\not\equiv 1\mod{p^2}$,
    \item$\bar{\rho}_g(\op{fr}_\ell)=\mtx{\ell}{0}{0}{1}$ for a suitable choice of basis for the residual representation.
\end{enumerate}

Let $\Omega_g$ be the set of prime numbers such that 
\begin{enumerate}
    \item $\ell\nmid N_g p$, 
    \item $\ell\not\equiv \pm 1\mod{p}$,  \item$\bar{\rho}_g(\op{fr}_\ell)=\mtx{-\ell}{0}{0}{-1}$ for a suitable choice of basis for the residual representation. 
    \end{enumerate}
    The primes $\Pi_g$ and $\Omega_g$ have Dirichlet density $\frac{(p-3)}{p(p-1)}$ and $\frac{(p-3)}{(p-1)^2}$ respectively.

Let $n, r\in \Z_{\geq 0}$ be such that $\op{max}\{n, r\}>0$. 
Let $n\geq 1$ be an integer. Then for any sets of primes $\{q_1, \dots, q_n\}\subset \Pi_g$ and $\{\ell_1, \dots, \ell_r\}\subset \Omega_g$, there exists a normalized newform $f$ of weight $2$ of level $N_f=N_g \times \prod_{i=1}^n q_i\times \prod_{j=1}^r \ell_j$ such that 
\begin{enumerate}
    \item $f$ has good ordinary reduction at $p$, 
    \item $\bar{\rho}_g\simeq \bar{\rho}_f$, 
    \item $\mu_p(f)=0$, 
    \item $\lambda_p(f)=\lambda_p(g)+n$.
\end{enumerate}

\end{theorem}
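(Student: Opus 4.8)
The plan is to produce $f$ by level raising and then to compare the Iwasawa invariants of the $p$-congruent forms $f$ and $g$ via imprimitive Selmer groups, in the style of Greenberg--Vatsal. First, note that the conditions defining $\Pi_g$ and $\Omega_g$ are precisely the mod $p$ level-raising conditions at the prime in question: the eigenvalues of $\bar{\rho}_g(\op{fr}_\ell)$ are $\{\ell,1\}$ (resp.\ $\{-\ell,-1\}$), so their ratio is $\ell^{\pm1}$ and $\ell$ is a level-raising prime. Iterating Ribet's level-raising theorem, and using the surjectivity of $\bar{\rho}_g$ together with results on raising to non-optimal level so that the resulting eigenform is genuinely \emph{new} (not merely of the correct level) at each auxiliary prime, one obtains a normalized newform $f$ of level exactly $N_f=N_g\prod_{i}q_i\prod_{j}\ell_j$ with $\bar{\rho}_f\simeq\bar{\rho}_g$ that is Steinberg at every $q_i$ and $\ell_j$. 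Since $\ell\not\equiv\pm1\pmod p$ forces the eigenvalue ratio of $\bar{\rho}_g(\op{fr}_\ell)$ to be unambiguously $\ell$, the Steinberg type is pinned down: $a_{q_i}(f)=1$ (split multiplicative) for $q_i\in\Pi_g$, and $a_{\ell_j}(f)=-1$ (non-split multiplicative) for $\ell_j\in\Omega_g$. Conclusion (1) is immediate because $p\nmid N_f$ and $a_p(f)\equiv a_p(g)\pmod{\mathfrak{p}}$ with $a_p(g)$ a unit, and (2) is built into level raising; the Dirichlet densities follow from Chebotarev applied to the compositum of the fixed field of $\bar{\rho}_g$ and $\Q(\mu_{p^2})$, which are linearly disjoint over $\Q(\mu_p)$ because $\op{GL}_2(\F_p)^{\op{ab}}$ has no quotient of order $p$.

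Next, put $\Sigma_0=\{\ell:\ell\mid N_f\}$ and $\Sigma=\Sigma_0\cup\{p\}$, and work with the $\Sigma$-imprimitive Bloch--Kato Selmer group over $\Q_{\cyc}$. Because $\bar{\rho}_f\simeq\bar{\rho}_g$ and the residual ordinary filtration at $p$ depends only on $\bar{\rho}_g$, the mod $\mathfrak{p}$ reductions of $\Sel^{\Sigma}(f/\Q_{\cyc})$ and $\Sel^{\Sigma}(g/\Q_{\cyc})$ coincide, so $\mu_p^{\Sigma}(f)=\mu_p^{\Sigma}(g)$. The imprimitive-to-primitive comparison sequence gives $\lambda_p^{\Sigma}(h)=\lambda_p(h)+\sum_{\ell\in\Sigma_0}\lambda(\cH_\ell(h))$, together with the analogous identity for $\mu$, where $\cH_\ell(h)=H^1(\Q_{\cyc,\ell},A_h)/H^1_f(\Q_{\cyc,\ell},A_h)$; since each $\cH_\ell$ has trivial $\mu$-invariant ($\ell\neq p$) and $\mu_p(g)=0$, we get $\mu_p^{\Sigma}(g)=0$, hence $\mu_p^{\Sigma}(f)=0$, hence $\mu_p(f)=0$ — conclusion (3) — and, since $\mu_p^{\Sigma}=0$, also $\lambda_p^{\Sigma}(f)=\lambda_p^{\Sigma}(g)$.

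It remains to evaluate $\lambda_p(f)-\lambda_p(g)=\sum_{\ell\in\Sigma_0}\big(\lambda(\cH_\ell(g))-\lambda(\cH_\ell(f))\big)$; this local computation is the heart of the proof and the main obstacle, and it is where every hypothesis in $\Pi_g,\Omega_g$ is used. For $\ell\mid N_g$: $f$ and $g$ are Steinberg at $\ell$ with the same $a_\ell\in\{\pm1\}$ (as $p\geq 3$), so $\lambda(\cH_\ell(f))=\lambda(\cH_\ell(g))$ and the term vanishes. For $\ell_j\in\Omega_g$: the eigenvalues of $\op{fr}_{\ell_j}$ on $A_g(-1)$ are $\equiv-1$ and $\equiv-\ell_j^{-1}\pmod{\mathfrak{p}}$, neither $\equiv1$ since $\ell_j\not\equiv\pm1$, so $\cH_{\ell_j}(g)$ is finite; the parallel analysis of the non-split multiplicative local representation of $f$ makes $\cH_{\ell_j}(f)$ finite too, so this term vanishes. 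For $q_i\in\Pi_g$: the eigenvalue of $\op{fr}_{q_i}$ on $A_g(-1)$ that is $\equiv1\pmod{\mathfrak{p}}$ contributes a corank-one divisible piece to $\cH_{q_i}(g)$, while $q_i^{p-1}\not\equiv1\pmod{p^2}$ forces $\op{fr}_{q_i}$ to topologically generate $\op{Gal}(\Q_{\cyc}/\Q)$, so $q_i$ has a unique prime above it in $\Q_{\cyc}$ and $\cH_{q_i}(g)$ is pseudo-isomorphic to $\Lambda/(T-c)$ with $c\in\mathfrak{p}$; thus $\lambda(\cH_{q_i}(g))=1$, rather than the larger power of $p$ that would occur if $q_i$ split in $\Q_{\cyc}$. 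On the other hand, for the split multiplicative local representation of $f$ at $q_i$ the condition $q_i\not\equiv1\pmod p$ makes $\cH_{q_i}(f)$ finite, so $\lambda(\cH_{q_i}(f))=0$. Summing the contributions, $\lambda_p(f)-\lambda_p(g)=n\cdot1+r\cdot0=n$, which is conclusion (4); and since $\Pi_g$ is infinite, varying the choice of the $q_i$ yields infinitely many such $f$.

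Equivalently, one may run the argument through the cyclotomic main conjecture of Kato and Skinner--Urban, which applies since $\bar{\rho}_g$ is surjective: then $L_p^{\Sigma}(f)\equiv L_p^{\Sigma}(g)\pmod{\mathfrak{p}}$, and the Euler factors $1-a_{q_i}(g)q_i^{-1}\langle q_i\rangle^{-1}+q_i^{-1}\langle q_i\rangle^{-2}$ and $1-a_{q_i}(f)q_i^{-1}\langle q_i\rangle^{-1}$ removed at $q_i\in\Pi_g$ have $\lambda$-invariants $1$ and $0$ for exactly the same reasons — the former vanishes at $T=0$ because $a_{q_i}(g)\equiv q_i+1\pmod{\mathfrak{p}}$, and its linear term is a unit precisely because $q_i\not\equiv1\pmod p$ and $q_i^{p-1}\not\equiv1\pmod{p^2}$ — while the factors at $\ell_j\in\Omega_g$ and at $\ell\mid N_g$ contribute nothing. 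The points requiring the most care are this local $\lambda$-computation, in particular pinning the $\Pi_g$-contribution to exactly $1$ (which is what the congruence modulo $p^2$ buys), and the level-raising step's requirement that $f$ be genuinely new at every auxiliary prime.
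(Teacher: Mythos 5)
Your overall strategy matches the paper's: raise the level to exactly $N_g\prod q_i\prod\ell_j$, compare $\lambda$-invariants of $f$ and $g$ through the Greenberg--Vatsal formula $\lambda_p(f)-\lambda_p(g)=\sum_\ell\big(\sigma_\ell(g)-\sigma_\ell(f)\big)$ over primes $\ell\mid N_f$, compute each local term, and get the densities from Chebotarev applied to $\Q(\bar\rho_g)\cdot\Q_1$. Your local computations for $q_i\in\Pi_g$ and $\ell_j\in\Omega_g$ are correct and in substance identical to Lemmas \ref{lemma 4.4} and \ref{lemma 4.5}; in particular you correctly isolate that $q_i^{p-1}\not\equiv 1\pmod{p^2}$ forces a unique prime of $\Q_{\cyc}$ above $q_i$ and hence pins the contribution at exactly $1$.

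There is, however, a genuine gap in your treatment of the primes $\ell\mid N_g$. You assert that $f$ and $g$ are both Steinberg at every such $\ell$ with the same $a_\ell\in\{\pm1\}$. That is only the case when $\ell\|N_g$; nothing in the hypotheses forces $N_g$ to be squarefree, and if $\ell^2\mid N_g$ the local representation at $\ell$ is supercuspidal or ramified principal series and $a_\ell=0$, so your argument does not apply. The equality $\sigma_\ell(f)=\sigma_\ell(g)$ is still true, but the proof must use the optimal-level hypothesis, as in Lemma \ref{lemma 4.8}: since $N_g$ equals the prime-to-$p$ part of the Artin conductor of $\bar\rho$ and $\operatorname{ord}_\ell(N_f)=\operatorname{ord}_\ell(N_g)$, one has $\dim A_h[\varpi]_{\operatorname{I}_\ell}=\dim(V_h)_{\operatorname{I}_\ell}$ for $h\in\{f,g\}$, so $\widetilde P_\ell(h;X)$ is computed on $A_h[\varpi]_{\operatorname{I}_\ell}$ and depends only on $\bar\rho$, giving $\widetilde P_\ell(f;X)=\widetilde P_\ell(g;X)$ and hence $d_\ell(f)=d_\ell(g)$. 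Secondarily, the level-raising step as you phrase it (iterating Ribet, plus unnamed ``newness'' results) is precisely the content of Carayol's necessity theorem together with the Diamond--Taylor sufficiency theorem (Theorems \ref{carayol thm} and \ref{DT theorem}); you should cite that rather than leave the existence of a form of exact level $N_f$, new at every auxiliary prime, as an unspecified input.
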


\par In the special case when $\mu_p(g)=0$ and $\lambda_p(g)\leq 1$, one is able to realize an infinite family of modular forms $f$ that are $p$-congruent to $g$, for which the Bloch-Kato Selmer group over $\Q$ has prescribed corank. We refer to Theorem \ref{last thm} for the statement of the result.

\par The following result further illustrates Theorem \ref{main thm 1}.

\begin{theorem}\label{main thm 2}
    There is a positive density set of primes $p$, such that for any $n\geq 0$, there exist infinitely many normalized Hecke cuspdidal newforms $f$ of weight $2$ such that 
    \begin{enumerate}
    \item $f$ has good ordinary reduction at $p$, 
    \item $\bar{\rho}_f$ is surjective, 
    \item $\mu_p(f)=0$, 
    \item $\lambda_p(f)=n$.
\end{enumerate}
\end{theorem}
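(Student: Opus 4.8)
The plan is to deduce Theorem \ref{main thm 2} from Theorem \ref{main thm 1} by first producing, for a positive density set of primes $p$, a single newform $g$ of weight $2$ satisfying hypotheses (1)--(4) of Theorem \ref{main thm 1} and with $\lambda_p(g)=0$; then applying Theorem \ref{main thm 1} with $r=0$ and varying $n$. The first step is where essentially all the work is. I would start from an explicit elliptic curve $E_{/\Q}$ of small conductor (e.g.\ conductor $11$, $14$, or $15$) with surjective mod-$p$ representation for all but finitely many $p$ (by Mazur's theorem on rational isogenies, or by an explicit check), and such that the algebraic part of $L(E,1)/\Omega_E$ is a $p$-adic unit for a positive density set of $p$; then $g = f_E$ is the associated weight-$2$ newform. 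For such $E$ one wants: $p\nmid N_E$, $E$ ordinary at $p$, $E$ of optimal level (automatic when $\bar\rho$ is surjective and one chooses $N_g = N_E$ prime to $p$), $\mu_p(E)=0$, and $\lambda_p(E)=0$.

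Here the key inputs are the following. By a theorem of Greenberg--Vatsal (or Emerton--Pollack--Weston), for a fixed $E$ with $\bar\rho_E$ surjective one has $\mu_p(E)=0$ for all but finitely many $p$ (indeed $\mu_p = 0$ whenever $\bar\rho_E$ is irreducible, for $p$ not dividing the Manin constant issues); moreover, by the $p$-adic analogue of the analytic class number formula together with the main conjecture (Kato, Skinner--Urban in the ordinary case), $\lambda_p(E)=0$ precisely when $L(E,1)\neq 0$ and the algebraic $L$-value $\big(L(E,1)/\Omega_E\big)$ is prime to $p$ and the relevant local Tamagawa factors are prime to $p$. For a fixed $E$ of rank $0$ with $L(E,1)\neq 0$, the quantity $L(E,1)/\Omega_E$ is a fixed nonzero rational number, so it is a $p$-adic unit for all but finitely many $p$; the Tamagawa factors and torsion are likewise $p$-adic units outside a finite set; and the ordinary condition excludes a density-zero set of primes (the supersingular primes, of density $0$ by Elkies--Serre since $E$ is non-CM if chosen so, or at worst density $<1$). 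Hence there is a positive density --- in fact density $1$ --- set $S$ of primes $p\geq 5$ for which $g=f_E$ satisfies all four hypotheses of Theorem \ref{main thm 1} together with $\lambda_p(g)=0$.

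With such a $p\in S$ and such a $g$ fixed, I then invoke Theorem \ref{main thm 1} with $r=0$: for each $n\geq 1$ and each choice of $n$ distinct primes $q_1,\dots,q_n\in\Pi_g$ (the set $\Pi_g$ is infinite, having positive Dirichlet density $\tfrac{p-3}{p(p-1)}>0$ since $p\geq 5$), one obtains a newform $f$ of level $N_g\prod_i q_i$ with $f$ ordinary at $p$, $\bar\rho_f\simeq\bar\rho_g$ (hence surjective), $\mu_p(f)=0$, and $\lambda_p(f)=\lambda_p(g)+n=n$. Varying the choice of the $q_i$ over the infinitely many available primes yields infinitely many such $f$ for each fixed $n\geq 1$; the case $n=0$ is covered by $g$ itself together with its twists, or alternatively by noting one may start from infinitely many distinct base curves $E$. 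The main obstacle is the first step: one must exhibit at least one elliptic curve (or weight-$2$ newform) for which the vanishing of $\mu_p$ and $\lambda_p$ can be certified for a positive-density set of $p$ simultaneously with surjectivity of $\bar\rho$, and this is precisely where the rank-$0$, prime-to-$p$ $L$-value, and Manin-constant considerations must be assembled; everything after that is a direct application of Theorem \ref{main thm 1}.
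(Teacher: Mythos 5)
Your proposal follows essentially the same strategy as the paper: fix a non-CM rank-zero elliptic curve $E_{/\Q}$, show that the associated weight-$2$ newform $g$ satisfies the hypotheses of Theorem \ref{main thm 1} together with $\lambda_p(g)=0$ for a density-one set of primes $p$, and then apply Theorem \ref{main thm 1}. Where you diverge from the paper is in how you certify $\mu_p(g)=\lambda_p(g)=0$: you invoke the Iwasawa main conjecture (Kato, Skinner--Urban) and the $p$-adic analogue of the analytic class number formula to reduce to the prime-to-$p$-ness of $L(E,1)/\Omega_E$ and the Tamagawa numbers. The paper instead cites Greenberg's Theorems 4.1 and 5.1 in \cite{Gre99}, which are purely algebraic: they combine Mazur's control theorem with the Euler characteristic formula to show directly that for a fixed rank-zero curve with finite Tate--Shafarevich group, both invariants vanish for a density-one set of $p$, with no input from the main conjecture. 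Your route is valid but uses heavier machinery than needed, and buys nothing extra here. One small caveat: your parenthetical ``indeed $\mu_p=0$ whenever $\bar\rho_E$ is irreducible'' is Greenberg's conjecture, not a theorem; however, the weaker statement you actually need (that $\mu_p=0$ for all but finitely many $p$ for a fixed rank-zero curve) does follow from the Euler characteristic argument, so this slip is harmless. For the $n=0$ case you suggest twists or varying the base curve, whereas the intended application of Theorem \ref{main thm 1} is simply to take $n=0$, $r\geq 1$, and vary over subsets of $\Omega_g$; your alternatives would also work but require additional justification.
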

\subsection{Relationship with previous work} The existence of Galois representations for which the associated Selmer groups have large $\lambda$-invariant has been studied by various authors, cf. \cite{Gre99,ray2023constructing}. Our results are significantly stronger since we are able to explicitly realize any large enough integer as a $\lambda$-invariant. If at a given prime $p\geq 5$, there exists a newform $g$ satisfying the conditions of Theorem \ref{main thm 1}, and such that $\lambda_p(g)=0$, then, every integer $n\geq 0$ is seen to arise from a modular form which is $p$-congruent to $g$. This is indeed the case for a density $1$ set of primes $p$, as shown by the proof of Theorem \ref{main thm 2}. Furthermore, not only is one able to construct infinitely many modular Galois representations giving rise to a prescribed $\lambda$-invariant, one also obtains an explicit and satisfactory quantitative description of their levels. We contrast Theorem \ref{main thm 1} to the results of recent work by Hatley and Kundu \cite{hatley2022lambda}, where it is shown that there are infinitely many modular forms $f$ that are $p$-congruent to a fixed modular form $g$, for which $\lambda_p(f)=\lambda_p(g)$. This $\lambda$-stability result requires an additional assumption on $g$: that the $\lambda$-invariant of $g$ is minimal in the family of all $\lambda$-invariants for modular forms that are $p$-congruent to $g$. For further details, we refer to p.15 of \emph{loc. cit.} This assumption is clearly satisfied when $\lambda_p(g)$ is $0$. This $\lambda$-stability result follows from Theorem \ref{main thm 1} in the special case when $n=0$, without the additional hypothesis. The method used in proving our results does draw some inspiration from \cite{ray2023constructing,hatley2022lambda}.

\subsection{Organization} Including the introduction, the manuscript consists of 5 sections. In section \ref{s 2}, we set up basic notation and review the level raising theorems of Carayol (cf. Theorem \ref{carayol thm}) and Diamond-Taylor (cf. Theorem \ref{DT theorem}). In section \ref{s 3}, we describe the relationship between the Bloch-Kato and Greenberg Selmer groups over the cyclotomic $\Z_p$-extension of $\Q$. We describe the Iwasawa invariants associated to these Selmer groups. In section \ref{s 4}, we recall the results of Greenberg and Vatsal, and prove Theorem \ref{thm 4.9}, which paves the way to the proof of Theorem \ref{main thm 1}. In section \ref{s 5}, we compute the densities of the sets $\Pi_g$ and $\Omega_g$, and prove Theorems \ref{main thm 1} and \ref{main thm 2}.

\subsection*{Acknowledgment} The project was initiated in 2022 when the author was a Simons postdoctoral fellow at the Centre de recherches math\'ematiques in Montreal. From September 2022 to September 2023, the author's research was supported by the CRM Simons postdoctoral fellowship.

\section{Preliminaries}\label{s 2}
\par Fix an algebraic closure $\bar{\Q}/\Q$, and for each prime $\ell$, let $\bar{\Q}_\ell$ be an algebraic closure and fix an inclusion of $\iota_\ell:\bar{\Q}\hookrightarrow \bar{\Q}_\ell$. Set $\bar{\Z}$ to be the integral closure of $\Z$ in $\bar{\Q}$. The choice of embedding $\iota_\ell$ corresponds to a choice of prime $\mathfrak{l}|\ell$ of $\bar{\Z}$. The inclusion $\iota_\ell$ induces an isomorphism of $\bar{\Z}_{\mathfrak{l}}$ with $\bar{\Z}_\ell$. Let $\op{G}_\ell$ denote the absolute Galois group $\op{Gal}(\bar{\Q}_\ell/\Q_\ell)$. The inclusion $\iota_\ell$ induces an inclusion of $\op{G}_\ell$ into $\op{Gal}(\bar{\Q}/\Q)$, whose image is the decomposition group of $\mathfrak{l}$. Let $\op{I}_\ell$ be the inertia group of $\op{G}_\ell$, and choose a Frobenius element $\op{fr}_\ell\in \op{G}_\ell$. Given a Galois representation $\rho:\op{Gal}(\bar{\Q}/\Q)\rightarrow \op{GL}_2(\cdot)$, let $\rho_{|\ell}$ be the restriction of $\rho$ to $\op{G}_\ell$. The representation $\rho$ is \emph{unramified} at $\ell$ if the restriction of $\rho_{|\ell}$ to $\op{I}_\ell$ is trivial. Fix a prime $p\geq 5$ and let $\p|p$ be the prime above $p$, corresponding to the inclusion $\iota_p$. Let $f=\sum_{n=1}^\infty a_n(f)q^n$ be a normalized cuspidal newform of weight $k\geq 2$ and $\rho_f:\op{Gal}(\bar{\Q}/\Q)\rightarrow \op{GL}_2(\bar{\Q}_p)$ be the associated Galois representation. The Hodge-Tate weights for the Galois representation are $\{k-1, 0\}$. We note here that this is simply a matter of convention (instead of the Hodge-Tate weights being $\{0, 1-k\}$). There is a finite extension $K$ over $\Q_p$ such that w.r.t a suitable choice of basis, the image of $\rho_f$ lies in $\op{GL}_2(K)$. In greater detail, let $F$ be a number field containing the Fourier coefficients of $f$, and $K$ is the completion $F_{\mathfrak{p}}$. We set $V_f\simeq K^2$ to denote the underlying Galois module for the representation $\rho_f$. Set $\cO$ to denote the valuation ring of $K$, $\varpi$ be a uniformizer of $\cO$, and let $\kappa:=\cO/(\varpi)$ be the residue field of $\cO$. There exists a Galois stable $\cO$-lattice $T_f\subset V_f$. We shall also denote the integral representation on $T_f$ by $\rho_f:\op{Gal}(\bar{\Q}/\Q) \rightarrow \op{GL}_2(\cO)$. We denote its mod-$\varpi$ reduction by $\bar{\rho}_f:\op{Gal}(\bar{\Q}/\Q) \rightarrow \op{GL}_2(\kappa)$. We shall assume throughout the $\bar{\rho}_f$ is absolutely irreducible. In this setting, it is easy to see that the Galois stable $\cO$-lattice $T_f$ is uniquely determined, and hence, there is no ambiguity in the notation used.
\subsection{The level raising results of Diamond-Taylor}
\par Let $p$ be an odd prime number and $\bar{\rho}:\op{Gal}(\bar{\Q}/\Q)\rightarrow \op{GL}_2(\bar{\F}_p)$ be an irreducible Galois representation. Let $c\in \op{Gal}(\bar{\Q}/\Q)$ denote the complex conjugation; $\bar{\rho}$ is \emph{odd} if $\op{det}\bar{\rho}(c)=-1$. Serre \cite{serre1987representations} conjectured that any odd and irreducible representation is modular. In greater detail, the strong form of the conjecture states that $\bar{\rho}$ arises from a cuspidal newform $g$ of weight $k\geq 2$ on $\Gamma_1(N_{\bar{\rho}})$, where $N_{\bar{\rho}}$ is the prime to $p$ part of the Artin-conductor of $\bar{\rho}$ (cf. \cite[p.11]{ribet1999lectures}). The weight $k=k(\bar{\rho})$ is prescribed according to \cite[section 2]{serre1987representations}. Khare and Wintenberger \cite{khare2009serre, khare2009serre2} proved Serre's conjecture, building upon prior work of Ribet \cite{ribetlevellowering}. Suppose that $f$ is a newform of weight $k$ and level $N_f$ coprime to $p$, such that the associated $p$-adic Galois representation $\rho_f$ lifts $\bar{\rho}$. Then, the optimal level $N_{\bar{\rho}}$ divides $N_f$. A theorem of Carayol \cite{carayol1989representations} proves necessary conditions for an integer $N_f$ to arise in this way from a newform $f$.
\begin{theorem}[Carayol]\label{carayol thm}
    Suppose there exists a modular form $f$ of weight $k$ of level $N_f$ such that $\bar{\rho}=\bar{\rho}_f$ is absolutely irreducible. The level $N_f$ admits a factorization $N_f=N_{\bar{\rho}} \prod_\ell \ell^{\alpha(\ell)}$, 
and for each $\ell$ with $\alpha(\ell)>0$, one of the following holds:
\begin{enumerate}
\item $\ell \nmid N_{\bar{\rho}}$, $\ell \left(\op{trace}\bar{\rho}\left(\sigma_{\ell} \right) \right)^2 = \left( 1+\ell\right)^2 \op{det}\bar{\rho}\left( \sigma_{\ell}\right)$ in $\bar{\F}_p$, and $\alpha(\ell)=1$;
\item $\ell \equiv -1 \mod{p}$ and one of the following holds:
\begin{enumerate}
\item $\ell \nmid N_{\bar{\rho}}$, $\op{trace} \bar{\rho}\left(\sigma_{\ell} \right) \equiv 0$ in $\overline{\F}_p$, and $\alpha(\ell)=2$;
\item $\ell \pdiv N_{\bar{\rho}}$, $\op{det} \bar{\rho}$ is unramified at $\ell$, and $\alpha(\ell)=1$;
\end{enumerate}
\item $\ell \equiv 1 \mod{p}$ and one of the following holds:
\begin{enumerate}
\item $\ell \nmid N_{\bar{\rho}}$ and $\alpha(\ell)=2$;
\item $\ell\pdiv N_{\bar{\rho}}$ and $\alpha(\ell)=1$ or $\ell\nmid N_{\bar{\rho}}$ and $\alpha(\ell)=1$.
\end{enumerate}
\end{enumerate}
\end{theorem}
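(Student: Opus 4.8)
The theorem is purely local at each prime $\ell\neq p$, so the plan is to translate it into a statement about how the Artin conductor of $\rho_f|_{\op{G}_\ell}$ behaves under reduction modulo $\varpi$. First I would invoke local--global compatibility of the Langlands correspondence for $\op{GL}_2/\Q$: the level $N_f$ is the conductor of the automorphic representation attached to $f$, and for $\ell\neq p$ its $\ell$-exponent equals the Artin conductor exponent $a_\ell(\rho_f)$ of $\rho_f|_{\op{G}_\ell}$; likewise the $\ell$-exponent of $N_{\bar{\rho}}$ is $a_\ell(\bar{\rho})$, the Artin conductor of $\bar{\rho}|_{\op{G}_\ell}$ (equivalently of its semisimplification). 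Hence it suffices to show, prime by prime, that $\alpha(\ell):=a_\ell(\rho_f)-a_\ell(\bar{\rho})\geq 0$, and that whenever $\alpha(\ell)>0$ the pair $\bigl(\bar{\rho}|_{\op{G}_\ell},\ell\bigr)$ falls into one of the listed alternatives.

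The second step is to show that the wild part of the conductor is invariant under reduction. Since $\ell\neq p$, the wild inertia $\op{P}_\ell\subset\op{I}_\ell$ is pro-$\ell$, hence acts on the lattice $T_f$ through a finite quotient of order prime to $p$; because the $p$-adic and mod-$p$ representation theories of such a group coincide (Brauer: the decomposition matrix is the identity), $\rho_f|_{\op{P}_\ell}$ and $\bar{\rho}|_{\op{P}_\ell}$ have the same Brauer character, hence the same Swan conductor. On the other hand, reducing the saturated $\cO$-lattice inside the inertia-invariant subspace gives $\dim_\kappa(\bar{\rho})^{\op{I}_\ell}\geq\dim_K(\rho_f)^{\op{I}_\ell}$. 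Using $a_\ell(V)=\bigl(2-\dim V^{\op{I}_\ell}\bigr)+\op{Sw}(V)$ for a $2$-dimensional $V$, these combine to $\alpha(\ell)=\dim_\kappa(\bar{\rho})^{\op{I}_\ell}-\dim_K(\rho_f)^{\op{I}_\ell}\in\{0,1,2\}$: the conductor can only increase, and only through the tame part, by exactly the drop in the dimension of the inertia invariants. In particular $\alpha(\ell)=2$ forces $\bar{\rho}$ unramified at $\ell$ and $(\rho_f)^{\op{I}_\ell}=0$.

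The third, and main, step is the case analysis. Since $\ker\bigl(\op{GL}_2(\cO)\to\op{GL}_2(\kappa)\bigr)$ is pro-$p$ while $\op{P}_\ell$ is pro-$\ell$, all ramification of $\rho_f$ not already visible in $\bar{\rho}$ is tame and factors through the maximal pro-$p$ (hence $\cong\Z_p$) quotient of tame inertia, on which $\op{fr}_\ell$ acts by $\gamma\mapsto\gamma^{\ell}$. When $\alpha(\ell)=2$, writing $\rho_f(\gamma)=M\in 1+\varpi M_2(\cO)$, the matrix $M$ has eigenvalues that are $p$-power roots of unity, none equal to $1$, and $\op{fr}_\ell$ conjugates $M$ to $M^{\ell}$, permuting the eigenlines: if it fixes them then $\lambda^{\ell-1}=1$, forcing $\ell\equiv1\bmod p$ with no condition on the trace (case (3a)); if it swaps them then $\lambda_2=\lambda_1^{\ell}$, forcing $\lambda_1^{\ell^2-1}=1$, hence $\ell\equiv\pm1\bmod p$, and in the genuinely new case $\ell\equiv-1$, $\ell\not\equiv1$ the Frobenius is anti-diagonal in the eigenbasis of $M$, so $\op{tr}\bar{\rho}(\op{fr}_\ell)=0$ (case (2a)). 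When $\alpha(\ell)=1$ and $\ell\nmid N_{\bar{\rho}}$, the jump is realised by an unramified twist of the Steinberg representation — and the monodromy relation $\rho_f(\op{fr}_\ell)N\rho_f(\op{fr}_\ell)^{-1}=N^{\ell}$, with $N$ the nonzero nilpotent monodromy, forces the two unramified diagonal characters into ratio $\ell$, i.e. $\ell\bigl(\op{tr}\bar{\rho}(\op{fr}_\ell)\bigr)^2=(1+\ell)^2\op{det}\bar{\rho}(\op{fr}_\ell)$ (case (1)) — or, when $\ell\equiv1\bmod p$, by a ramified principal series with no trace constraint (covered by case (3)). When $\alpha(\ell)=1$ and $\ell\pdiv N_{\bar{\rho}}$, the residual $\bar{\rho}|_{\op{G}_\ell}$ has conductor $1$ and $\rho_f|_{\op{G}_\ell}$ is, up to unramified twist, a ramified principal series or an induction from the unramified quadratic extension, of conductor $2$; requiring the relevant ramified character of $\op{I}_\ell$, or its square, to become trivial modulo $\varpi$ yields $\ell\equiv1\bmod p$ (case (3b)) or $\ell\equiv-1\bmod p$ with $\op{det}\bar{\rho}$ unramified at $\ell$ (case (2b)). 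Assembling these local verdicts over all $\ell$ produces the asserted factorization of $N_f$.

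The hard part will be carrying out the local classification cleanly and exhaustively: for conductor $1$ and $2$ one must run uniformly through the Weil--Deligne types that $\rho_f|_{\op{G}_\ell}$ can take — unramified twists of the special representation, ramified principal series, and dihedral inductions — decide for each which residual type it lifts, and track exactly which eigenvalue coincidence or triviality-of-a-character condition the residual representation records. Everything hinges on one elementary fact that I would isolate at the outset: a nontrivial tame character of $\op{I}_\ell$ with values in $1+\varpi\cO$ has $p$-power order, while a tame character extending to $\op{G}_\ell$ (resp. factoring through $\op{G}_{\Q_{\ell^2}}$) has order dividing $\ell-1$ (resp. $\ell^2-1$); this is precisely where the congruences $\ell\equiv1\bmod p$ and $\ell\equiv-1\bmod p$ enter, and the bulk of the remaining work is the bookkeeping of how these interact with the constraint on $\op{det}\bar{\rho}$.
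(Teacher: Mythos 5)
The paper offers no proof of this statement: it is quoted as ``Theorem (Carayol)'' and deferred to \cite{carayol1989representations}, with the companion existence statement taken from Diamond--Taylor. So there is no in-text argument to compare yours against; what one can do is assess whether your reconstruction is faithful to the standard proof, and it is. Your three-step plan --- (i) reduce to a prime-by-prime statement about Artin conductors via local--global compatibility, (ii) show the Swan conductor is rigid under reduction mod $p$ because wild inertia at $\ell\neq p$ is pro-$\ell$ and the kernel of $\op{GL}_2(\cO)\to\op{GL}_2(\kappa)$ is pro-$p$, so that the only possible jump in conductor is the drop $\dim_\kappa \bar{V}^{\op{I}_\ell}-\dim_K V_f^{\op{I}_\ell}\in\{0,1,2\}$ in the tame invariants, and (iii) extract the congruences on $\ell$ from the tame relation $\op{fr}_\ell\,\tau_\ell\,\op{fr}_\ell^{-1}=\tau_\ell^\ell$ acting on a generator $M\in 1+\varpi M_2(\cO)$ of the new ramification --- is exactly Carayol's mechanism, and the eigenvalue bookkeeping you do in the $\alpha(\ell)=2$ case (Frobenius either fixes the two eigenlines of $M$, giving $\ell\equiv 1$, or swaps them, giving $\ell\equiv -1$ and trace $0$) is the crux.

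Two points where the sketch, as written, stops short of a proof. First, the $\alpha(\ell)=1$ analysis with $\ell\nmid N_{\bar\rho}$ asserts the Weil--Deligne type is either a Steinberg twist or a ramified principal series, and the $\alpha(\ell)=1$, $\ell\,\|\,N_{\bar\rho}$ analysis asserts it is a ramified principal series or a dihedral induction; both dichotomies need to be justified from the conductor bound and one must still extract, case by case, exactly the conditions in (1), (2b), (3b) (for instance, in (2b) the constraint that $\det\bar\rho$ be unramified at $\ell$ comes from the induced character being trivial on $\op{I}_\ell$ after the norm, which your sketch does not spell out). Second, the argument that $\alpha(\ell)\in\{0,1,2\}$ only uses that invariants can drop by at most $2$; to match the theorem's exact list you also need to check that when $\ell\,\|\,N_{\bar\rho}$ the drop is at most $1$ (immediate from $\dim\bar V^{\op{I}_\ell}=1$), and to address why no case with $\ell^2\mid N_{\bar\rho}$ appears. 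You flag the missing bookkeeping yourself in your last paragraph, so I would call this a correct and well-oriented sketch rather than a complete proof --- which is appropriate given that the paper itself treats the result as a citation.
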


\begin{definition}
    The set of levels satisfying the conditions outlined in Theorem \ref{carayol thm} is denoted by $\mathcal{S}(\bar{\rho})$.
\end{definition}

\begin{theorem}[Diamond-Taylor \cite{diamond1994non}]\label{DT theorem}
Let $p\geq 5$ be a prime, \[\bar{\rho}:\op{Gal}(\bar{\Q}/\Q)\rightarrow \op{GL}_2(\bar{\F}_p)\] be an irreducible Galois representation. Assume that $\bar{\rho}$ arises from a newform $g$ on $\Gamma_1(N_{\bar{\rho}})$ of weight $k$. Assume that $k$ lies in the range $2\leq k\leq p-2$, and let $M\in \mathcal{S}(\bar{\rho})$. Then there exists a cuspidal newform $f$ of weight $k$ on $\Gamma_1(M)$, such that $\bar{\rho}_f=\bar{\rho}$. 
\end{theorem}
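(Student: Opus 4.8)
The plan is to argue by induction on the number of bad primes dividing $M/N_{\bar{\rho}}$, reducing at each stage to raising the level by a single prime power. The base case $M=N_{\bar{\rho}}$ is exactly the hypothesis that $\bar{\rho}$ comes from the newform $g$ of weight $k$ on $\Gamma_1(N_{\bar{\rho}})$. For the inductive step, write $M=M'\ell^{\alpha}$ with $M'\in\mathcal{S}(\bar{\rho})$, $\ell\nmid M'$, and $\alpha=\alpha(\ell)\in\{1,2\}$, and suppose a newform $f'$ of weight $k$ on $\Gamma_1(M')$ with $\bar{\rho}_{f'}=\bar{\rho}$ has been produced. One must show that the maximal ideal $\mathfrak{m}_{\bar{\rho}}$ of the weight-$k$, level-$M$ Hecke algebra lies in the support of the $\ell$-new quotient cut out by the correct local type at $\ell$; a newform $f$ with $N_f=M$ and $\bar{\rho}_f=\bar{\rho}$, new at $\ell$ of conductor exponent exactly $\alpha$, then follows by lifting the resulting residual eigensystem to characteristic zero (the local Hecke algebra at $\mathfrak{m}_{\bar{\rho}}$ is finite flat over $\Zp$, hence has a characteristic-zero point, and the conductor of the lift at each bad prime is at least, hence by construction equal to, the prescribed value).

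To establish this I would work inside the mod-$p$ \'etale cohomology of $X_1(M)$ with coefficients in the local system $\mathrm{Sym}^{k-2}$; the Fontaine--Laffaille condition $2\le k\le p-2$ ensures this cohomology is well-behaved modulo $\varpi$ and that the residual representations occurring in it have the expected Serre invariants, so that the geometry of the weight-$2$ situation carries over with these coefficients. Following Ribet's method of level raising, as extended by Diamond--Taylor, one then passes via the Jacquet--Langlands correspondence --- after adjoining an auxiliary prime $q$ at which $\bar{\rho}$ is sufficiently generic to guarantee multiplicity one --- to a Shimura curve $X^{B}$ attached to a quaternion algebra $B/\Q$ ramified at $\ell$ and one other carefully chosen place. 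The special fiber of $X^{B}$ at $\ell$ is described by Deligne--Rapoport / Cerednik--Drinfeld as a union of components glued along supersingular points (with a further level structure at $\ell$ when $\alpha=2$), and the character group of the toric part of the N\'eron model of its Jacobian, with its Hecke action, is the ``supersingular module'' that detects which residual eigensystems of conductor prime to $\ell$ become new at $\ell$. The three cases of Carayol's theorem correspond precisely to the three possible shapes of $\bar{\rho}|_{\op{G}_\ell}$: case (1), $\ell\nmid N_{\bar{\rho}}$ with the eigenvalues of $\bar{\rho}(\op{fr}_\ell)$ in ratio $\ell^{\pm1}$, forces $\alpha=1$; and when $\ell\equiv\pm1\pmod p$ both $\alpha=1$ and $\alpha=2$ are possible according to whether $\bar{\rho}$ is ramified at $\ell$ and how $\det\bar{\rho}$ behaves there. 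That the eigensystem extracted from the supersingular module is genuinely $\ell$-new of conductor exactly $\ell^{\alpha}$ is then a local computation at $\ell$, matching admissible types of $\pi_\ell$ against $\bar{\rho}|_{\op{G}_\ell}$.

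The decisive input --- and the step I expect to be the main obstacle --- is a form of \emph{Ihara's lemma}: one needs that the relevant degeneracy map between the mod-$p$ cohomology groups at adjacent levels (equivalently, between the $\varpi$-torsion of the two Jacobians) has no kernel supported at $\mathfrak{m}_{\bar{\rho}}$, so that the snake-lemma diagram built from the exact sequences of the special fibers produces a nonzero $\ell$-new class and not merely oldforms. This is where irreducibility of $\bar{\rho}$ enters --- and, in the setting in which Theorem \ref{main thm 1} is applied, its surjectivity onto $\mathrm{GL}_2(\F_p)$: these hypotheses supply multiplicity one at $\mathfrak{m}_{\bar{\rho}}$, so that the localized cohomology is free of rank two over the local Hecke ring and the diagram chase is clean. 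Carrying this out for each bad prime in turn, adding the prime powers of $M/N_{\bar{\rho}}$ one at a time and checking at each step that the local types imposed at the previously added primes are preserved, completes the induction.
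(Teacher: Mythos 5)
The paper does not prove this statement: it is Theorem~A of Diamond--Taylor \cite{diamond1994non}, imported as a black box, and the only surrounding text is the remark that, combined with Carayol's theorem, it shows $\mathcal{S}(\bar{\rho})$ is exactly the set of levels occurring. So there is no internal proof to compare against; what you have written is an attempted reconstruction of the Diamond--Taylor argument itself, and it should be judged on those terms.

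Your outline does capture the broad shape of the original proof (Ribet-style level raising one prime at a time, transfer to Shimura curves via Jacquet--Langlands, the Cerednik--Drinfeld description of the special fiber, Ihara's lemma as the engine, and Fontaine--Laffaille theory to handle the $\mathrm{Sym}^{k-2}$ coefficients), but there are two places where what you wrote would not close. First, at the decisive Ihara-lemma step you appeal to ``multiplicity one at $\mathfrak{m}_{\bar{\rho}}$,'' and you justify this by borrowing the surjectivity of $\bar{\rho}$ onto $\mathrm{GL}_2(\F_p)$ from the hypotheses of Theorem~\ref{main thm 1}. That is not available here: Theorem~\ref{DT theorem} assumes only that $\bar{\rho}$ is irreducible, and Diamond--Taylor's proof must (and does) get by without strong multiplicity one --- this is precisely why their argument passes through auxiliary level structure and a careful analysis of component groups rather than a bare diagram chase. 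As written, your sketch proves a strictly weaker statement. Second, the concluding lift to characteristic zero (``the local Hecke algebra is finite flat over $\Z_p$, hence has a characteristic-zero point, and the conductor at each bad prime is at least, hence equal to, the prescribed value'') elides the real content: a characteristic-zero eigenform lying above $\mathfrak{m}_{\bar{\rho}}$ on the $\ell$-new quotient need not a priori have conductor exactly $\ell^{\alpha}$ at $\ell$, and matching the conductor exponent to the residual type, in each of Carayol's cases and in the presence of the nontrivial coefficient system, is a genuine local argument that your sketch leaves as an assertion. Relatedly, your parenthetical ``so that the geometry of the weight-$2$ situation carries over with these coefficients'' is exactly where most of the new work in Diamond--Taylor lives; it cannot be invoked as a given.
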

The result of Carayol and the above level-raising result of Diamond and Taylor shows that $\mathcal{S}(\bar{\rho})$ is precisely the set of levels for the cuspidal newforms $f$ such that $\bar{\rho}_f\simeq \bar{\rho}$.

\section{Selmer groups associated to modular forms}\label{s 3}

\par Let $f=\sum_{n=1}^\infty a_n(f) q^n$ be a normalized new cuspform on $\Gamma_1(N_f)$ and $p$ be an odd prime which is coprime to $N_f$. Let $\rho_f:\op{Gal}(\bar{\Q}/\Q)\rightarrow \op{GL}_2(\cO)$ be the associated Galois representation and assume that the residual representation is absolutely irreducible. Here, $K/\Q_p$ is a finite extension and $\cO$ is the valuation ring of $K$. Let $V_f$ and $T_f$ be as defined in previous sections. We shall assume throughout that $p$ is an ordinary prime, i.e., $p\nmid a_p(f)$. Set $\mu_{p^n}$ to denote the $p^n$-th roots of unity in $\bar{\Q}$, and let $\Q(\mu_{p^n})$ be the cyclotomic extension of $\Q$ generated by $\mu_{p^n}$. Note that the Galois group $\op{Gal}(\Q(\mu_{p^n})/\Q)$ is isomorphic to $\left(\Z/p^n\Z\right)^\times$. Let $\Q(\mu_{p^\infty})$ be the union of cyclotomic fields $\Q(\mu_{p^n})$. The Galois group $\op{Gal}(\Q(\mu_{p^\infty})/\Q)$ is isomorphic to $\Z_p^\times$ and decomposes into a product $\Delta\times \Z_p$, where $\Delta$ is isomorphic to $\Z/(p-1)\Z$. The cyclotomic $\Z_p$-extension $\Q_{\op{cyc}}$ is the field $\Q(\mu_{p^\infty})^\Delta$ and the Galois group $\op{Gal}(\Q_{\op{cyc}}/\Q)$ is isomorphic to $\Z_p$. For $n\geq 0$, the \emph{$n$-th layer} $\Q_n/\Q$ is the extension contained in $\Q_{\op{cyc}}$ such that $[\Q_n:\Q]=p^n$.

\subsection{Definition of Selmer groups}
\par Let $S$ be a finite set of rational primes containing $\{p, \infty\}$, and $\Q_S\subset \bar{\Q}$ be the maximal extension in which all primes $\ell\notin S$ are unramified. Setting $A_f:=V_f/T_f$, we note that $A_f\simeq \left(K/\cO\right)^2$. Let $d^\pm$ be the dimension of the $(\pm 1)$-eigenspaces for complex conjugation. We note that $d^+=d^-=1$. Since we assume that $f$ is $p$-ordinary, there exists a $1$-dimensional, $K$-subspace $W_f\subseteq V_f$ of dimension $1$, which is stable under the action of $\op{G}_p$. Let $C$ be the image of $W_f$ in $A_f$, with respect to the mod-$T_f$ map $V_f\rightarrow A_f$, and set $D=A_f/C$. 

\par We assume that the set $S$ contains all primes dividing $N_f$. Let $L$ be an extension of $\Q$ which is contained in $\Q_{\op{cyc}}$. Thus $L$ is the field $\Q_n$ for some $n\geq 0$, or is the field $\Q_{\op{cyc}}$. Given a rational prime $\ell$, set $\ell(L)$ to be the set of primes of $L$ that lie above $\ell$. We note that the set $\ell(\Q_{\op{cyc}})$ is finite. Given a prime $w$ of $\Q_{\op{cyc}}$, let $\Q_{\op{cyc}, w}$ be the union of all completions of $p$-adic fields contained in $\Q_{\op{cyc}}$ at $w$. Given a prime $\ell\in S$, the Greenberg local Selmer condition $\cH_\ell(L, A_f)$ is defined as follows. For $\ell\neq p$, the local condition is defined by \[\cH_\ell(L, A_f):=\prod_{w\in \ell(L)} H^1(L_w,A_f). \] The local condition at $\ell=p$ is defined differently. Let $\eta_p$ be the unique prime of $L$ that lies above $p$; let $\op{I}_{\eta_p}$ denote the inertia subgroup of $\op{Gal}(\bar{L}_{\eta_p}/L_{\eta_p})$. Let 
\[H_{\eta_p}:=\op{ker}\left(H^1(L_{\eta_p}, A_f)\longrightarrow H^1(\op{I}_{\eta_p}, D)\right),\] and set 
\[\cH_p(L, A_f):=H^1(L_{\eta_p}, A_f)/H_{\eta_p}.\]
Following \cite{greenberg1989iwasawa}, the Greenberg Selmer group $\op{Sel}^{\op{Gr}}(L, A_f)$ is then defined as follows
\[\op{Sel}^{\op{Gr}}(L, A_f):=\op{ker}\left(H^1(\Q_S/L,A_f)\longrightarrow \bigoplus_{\ell\in S} \cH_\ell(L, A_f)\right).\]
For $\ell\in S$, $\cH_\ell(\Q_{\op{cyc}}, A_f)$ is a cofinitely generated $\Z_p$-module (cf. \cite[p.37]{greenberg2000iwasawa}). We let $\sigma_\ell(f)$ denote the $\cO$-corank of $\cH_\ell(\Q_{\op{cyc}}, A_f)$. For $\ell\neq p$, we describe an algorithm to compute $\sigma_\ell(f)$. Let $V_f'$ be the quotient $\left(V_{f}\right)_{\op{I}_\ell}$ and $\widetilde{P}_\ell(f;X)$ be the mod-$\varpi$ reduction of $P_\ell(f;X):=\op{det}\left(\op{Id}-\op{fr}_\ell X\mid V_f'\right)$. Let $s_\ell$ be the number of primes of $\Q_{\op{cyc}}$ that lie above $\ell$. Note that $s_\ell$ is the maximal power of $p$ such that $\ell^{p-1}\equiv 1\mod{ps_\ell}$. Let $d_\ell(f)$ be the multiplicity of $\ell^{-1}$ as a root of $\widetilde{P}_\ell(f;X)$. By \cite[Proposition 2.4]{greenberg2000iwasawa}, it follows that $\sigma_\ell(f)$ is given by
\begin{equation}\label{sigma formula}\sigma_\ell(f)=s_\ell d_\ell(f).\end{equation}

\par Given a number field $L/\Q$ contained in $\Q_{\op{cyc}}$, we let $\op{Sel}^{\op{BK}}(L, A_f)$ denote the Bloch-Kato Selmer group associated to $A_f$, cf. \cite{bloch1990functions} or \cite[p.73, l.-1]{ochiai2000control} for the precise definition. The Bloch-Kato Selmer group over $\Q_{\op{cyc}}$ is defined as the direct limit
\[\op{Sel}^{BK}(\Q_{\op{cyc}},A_f):=\varinjlim_n \op{Sel}^{BK}(\Q_{n},A_f).\]

\begin{proposition}\label{BK Greenberg comparison}
With respect to notation above, there is a natural map \[\op{Sel}^{\op{BK}}(\Q_{\op{cyc}}, A_f)\rightarrow \op{Sel}^{\op{Gr}}(\Q_{\op{cyc}}, A_f)\]with finite kernel and cokernel. 
\end{proposition}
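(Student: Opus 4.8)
The plan is to compare the two Selmer groups term by term along the defining exact sequences, using the fact that both are kernels of global-to-local restriction maps out of the same group $H^1(\Q_S/\Q_{\op{cyc}}, A_f)$. For a prime $\ell \in S$ with $\ell \neq p$, the Bloch--Kato local condition at $\ell$ over $\Q_{\op{cyc}}$ is the unramified (finite part) cohomology, which equals $\bigoplus_{w \in \ell(\Q_{\op{cyc}})} H^1_{\op{ur}}(L_w, A_f)$, whereas the Greenberg condition $\cH_\ell(\Q_{\op{cyc}}, A_f)$ is the full local cohomology $\bigoplus_{w} H^1(L_w, A_f)$. Since the Greenberg condition at $\ell \neq p$ is \emph{no condition at all}, the Greenberg Selmer group is a priori larger; one checks the quotient of local conditions $H^1(L_w,A_f)/H^1_{\op{ur}}(L_w,A_f)$ is a cofinitely generated torsion $\cO$-module, and in fact it is $\cO$-cotorsion, hence (being a direct limit of finite groups and, since $A_f$ has no $\Q_{\op{cyc},w}$-invariants in the relevant range for $\ell \nmid N_f$, actually finite) the difference contributes only a finite factor; for $\ell \mid N_f$ one argues similarly that $H^1_{\op{ur}}$ has finite index in $H^1$ locally at each $w$. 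At $\ell = p$ one compares the Bloch--Kato finite/exponential condition $H^1_f(\Q_{\op{cyc},\eta_p}, A_f)$ with the Greenberg subgroup $H_{\eta_p}$, and by the $p$-ordinary hypothesis (which gives the filtration $C \subset A_f$ with $D = A_f/C$) both cut out the same subspace up to finite error: this is the content of the ordinary case of the Bloch--Kato conjecture comparison, going back to the local computations in \cite{ochiai2000control} and \cite{greenberg1989iwasawa}.

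Concretely, the steps I would carry out are: (i) write down the two four-term exact sequences
\[
0 \to \op{Sel}^?(\Q_{\op{cyc}}, A_f) \to H^1(\Q_S/\Q_{\op{cyc}}, A_f) \xrightarrow{\lambda^?} \bigoplus_{\ell \in S} \cH^?_\ell(\Q_{\op{cyc}}, A_f),
\]
for $? \in \{\op{BK}, \op{Gr}\}$, with the same middle term and the same source of the restriction map; (ii) construct for each $\ell \in S$ a natural map $\cH^{\op{BK}}_\ell(\Q_{\op{cyc}}, A_f) \to \cH^{\op{Gr}}_\ell(\Q_{\op{cyc}}, A_f)$ with finite kernel and cokernel — for $\ell \neq p$ this is induced by the inclusion of the unramified subgroup, whose cokernel $H^1(L_w,A_f)/H^1_{\op{ur}}(L_w,A_f) \cong H^1_{\op{ur}}(L_w, A_f)^\vee$ (by local duality, twisting by the finiteness of the residue field layer) is finite; for $\ell = p$ this is the comparison of $H^1_f$ with $H^1(L_{\eta_p},A_f)/H_{\eta_p}$; (iii) apply the snake lemma to the resulting commutative diagram with exact rows to deduce that the map on Selmer groups has kernel and cokernel controlled by the kernels and cokernels of the local comparison maps together with the kernel and cokernel of $\lambda^{\op{Gr}}$ restricted appropriately — more precisely, $\ker$ and $\coker$ of the Selmer map inject into finite groups built from $\bigoplus_\ell \ker(\cH^{\op{BK}}_\ell \to \cH^{\op{Gr}}_\ell)$ and the image of $\lambda^{\op{Gr}}$ inside $\bigoplus_\ell \coker(\cH^{\op{BK}}_\ell \to \cH^{\op{Gr}}_\ell)$.

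The step I expect to be the main obstacle is the local comparison at $p$: showing that the Bloch--Kato local condition $H^1_f(\Q_{\op{cyc},\eta_p}, A_f)$ and the Greenberg condition $H_{\eta_p} = \ker\bigl(H^1(L_{\eta_p}, A_f) \to H^1(\op{I}_{\eta_p}, D)\bigr)$ differ only by a finite group. This requires unwinding the $p$-ordinary filtration and invoking the compatibility of the crystalline/exponential condition with the ordinary filtration — essentially that over the tower $\Q_{\op{cyc},\eta_p}$ the finite part of $H^1$ for the sub $C$ becomes everything while the quotient $D$ contributes the unramified-at-$p$ classes, so the two conditions agree up to the torsion coming from $H^0$ and $H^2$ terms, which are finite because $D^{\op{I}_{\eta_p}}$ and related invariants are finite under the ordinariness and the residual irreducibility hypotheses. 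I would cite \cite{ochiai2000control} and \cite{greenberg1989iwasawa} for this local input rather than reproving it, and the remaining work is the diagram chase, which is routine once the local maps are in place. A minor subtlety to handle in step (ii) for $\ell \neq p$ is that $\cH^{\op{BK}}_\ell$ is defined as a direct limit over the layers $\Q_n$, so one must check the finiteness is uniform in $n$ (it is, because the relevant local $H^0$'s stabilize as the residue fields of $\ell$ in $\Q_n$ stabilize — the prime $\ell$ is finitely split in $\Q_{\op{cyc}}$).
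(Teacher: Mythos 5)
The paper does not prove this proposition from scratch: the proof is a single line citing \cite[Corollary 4.3]{ochiai2000control}. So you are attempting to reprove a cited result, which is a fine exercise, and your high-level plan --- compare local conditions prime by prime, run the snake lemma on the two defining sequences, and isolate the place $p$ as the genuinely hard local comparison --- is indeed the skeleton of Ochiai's argument.

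However, there is a concrete error in your setup at $\ell \neq p$ that inverts the whole picture. In the paper's definition, $\cH_\ell(L, A_f) = \prod_{w\mid \ell} H^1(L_w, A_f)$ is the \emph{target} of the localization map whose kernel is the Selmer group, not the distinguished local subgroup. Thus the Greenberg condition at $\ell \neq p$ is that the class be \emph{trivial} in $H^1(L_w, A_f)$ --- the strictest possible condition --- not ``no condition at all.'' Your conclusion that ``the Greenberg Selmer group is a priori larger'' is therefore backwards, and the map you propose in step (ii) for $\ell \neq p$ (induced by the inclusion $H^1_{\op{ur}} \hookrightarrow H^1$) is pointing the wrong way. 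The correct reason the two conditions are compatible at $\ell \neq p$ over $\Q_{\op{cyc}}$ is different: the residue field of $\Q_{\op{cyc},w}$ over $\F_\ell$ is the unramified $\Z_p$-extension, so $\op{Gal}(\bar{\F}_\ell/\F_{\Q_{\op{cyc},w}})$ is pro-prime-to-$p$, and since $A_f$ is $p$-primary this forces $H^1_{\op{ur}}(\Q_{\op{cyc},w}, A_f) = 0$; hence ``unramified'' and ``trivial'' coincide in the limit. You gesture at a related stabilization, but do not identify this vanishing. Relatedly, note that $\op{Sel}^{\op{BK}}(\Q_{\op{cyc}}, A_f)$ is a direct limit over the layers $\Q_n$, not a Selmer group defined directly over $\Q_{\op{cyc}}$, so the comparison is really a control-theorem statement --- which is the actual content of Ochiai's paper --- rather than a direct snake-lemma comparison over the same base. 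Fixing the direction of the local maps and accounting for the limit is necessary before the diagram chase can go through.
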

\begin{proof}
    The result above is \cite[Corollary 4.3]{ochiai2000control}.
\end{proof}

From the point of view of Iwasawa theory, we may work with either Selmer group $\op{Sel}^{\op{BK}}(\Q_{\op{cyc}}, A_f)$ or $\op{Sel}^{\op{Gr}}(\Q_{\op{cyc}}, A_f)$. It conveniences us to work with the Greenberg Selmer group, and for ease of notation, we simply set \[\op{Sel}(\Q_{\op{cyc}}, A_f):=\op{Sel}^{\op{Gr}}(\Q_{\op{cyc}}, A_f).\]

\subsection{Iwasawa invariants}\label{s 3.2}

\par We set $\Gamma:=\op{Gal}(\Q_{\op{cyc}}/\Q)$ and $\Lambda$ denote the Iwasawa algebra $\cO\llbracket \Gamma\rrbracket:=\varprojlim_n \cO\left[\Gamma/\Gamma^{p^n}\right]$. Given a module $M$ over $\Lambda$, we shall set $M^\vee:=\op{Hom}_{\op{cnts}}\left(M, \Q_p/\Z_p\right)$. A module $M$ over $\Lambda$ is said to be cofinitely generated (resp. cotorsion) if $M^\vee$ is finitely generated (resp. torsion) as a module over $\Lambda$. The Selmer group $\op{Sel}(\Q_{\op{cyc}}, A_f)$ is a cofinitely generated module over $\Lambda$. There are many cases of interest for which it is known to be cotorsion over $\Lambda$, which we shall further discuss in the next subsection. Given $\Lambda$-modules $M$ and $M'$, a \emph{pseudo-isomorphism} is a map of $\Lambda$-modules $M\rightarrow M'$ whose kernel and cokernel both have finite cardinality. Let $M$ be a cofinitely generated and cotorsion $\Lambda$-module, then, as is well known, there is a pseudo-isomorphism 
\[M^\vee\longrightarrow \left(\bigoplus_{i=1}^s \Lambda/(\varpi^{m_i})\right)\oplus \left(\bigoplus_{j=1}^t \Lambda/(f_j(T)^{n_j})\right).\] In the above sum, $s, t, m_i, n_i\in \Z_{\geq 0}$, and $f_j(T)$ is an irreducible distinguished polynomial in $\Lambda$, i.e., an irreducible monic polynomial whose non-leading coefficients are divisible by $\varpi$. The $\mu$-invariant is given by 
\[\mu_p(M):=\begin{cases} \sum_i m_i & \text{ if }s>0;\\
0 & \text{ if }s=0.
\end{cases}\]
On the other hand, the $\lambda$-invariant is given by 
\[\lambda_p(M):=\begin{cases} \sum_j n_j\op{deg}(f_j) & \text{ if }t>0;\\
0 & \text{ if }t=0.
\end{cases}\]
 It follows from results of Kato \cite{kato2004p} that $\op{Sel}(\Q_{\op{cyc}}/A_f)$ is a cotorsion module over $\Lambda$. Then, the $\mu$ (resp. $\lambda$) invariant of $\op{Sel}(\Q_{\op{cyc}}, A_f)$ is denoted by $\mu_p(f)$ (resp. $\lambda_p(g)$). We note that $\mu_p(f)=0$ if and only if $\op{Sel}(\Q_{\op{cyc}}, A_f)$ is cofinitely generated as an $\cO$-module, and in this case, \[\lambda_p(f)=\op{corank}_{\cO}\left(\op{Sel}(\Q_{\op{cyc}}, A_f)\right).\] It follows from Proposition \ref{BK Greenberg comparison} that the $\lambda$-invariant of $\op{Sel}(\Q_{\op{cyc}}, A_f)$ is equal to the $\lambda$-invariant of $\op{Sel}^{\op{BK}}(\Q_{\op{cyc}}, A_f)$. We shall set $\op{rank}^{\op{BK}}(f)$ to denote the $\cO$-corank of the Bloch-Kato Selmer group $\op{Sel}^{\op{BK}}(\Q, A_f)$. 
\begin{proposition}\label{prop 3.2}
    Assume the $\mu_p(f)=0$. The following assertions hold:
    \begin{enumerate}
\item\label{prop 3.2 p1} $\op{rank}^{\op{BK}}(f)\leq \lambda_p(f)$,
\item\label{prop 3.2 p2} $\op{rank}^{\op{BK}}(f)\equiv  \lambda_p(f)\mod{2}$,
\item \label{prop 3.2 p3}
suppose that $\lambda_p(f)\leq 1$, then, $\op{rank}^{\op{BK}}(f)= \lambda_p(f)$.
    \end{enumerate}
\end{proposition}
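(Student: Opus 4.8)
The plan is to compare the Bloch--Kato Selmer group over $\Q$ with the Selmer group over $\Q_{\op{cyc}}$ via the cyclotomic control theorem, using the hypothesis $\mu_p(f)=0$ to force everything to be $\cO$-cofinitely generated. First I would recall that when $\mu_p(f)=0$ the Selmer group $\op{Sel}(\Q_{\op{cyc}}, A_f)$ is cofinitely generated over $\cO$, with $\cO$-corank equal to $\lambda_p(f)$. The structure theory then tells us that, as a $\Lambda$-module, $\op{Sel}(\Q_{\op{cyc}}, A_f)^\vee$ is pseudo-isomorphic to a sum of terms $\Lambda/(g_j(T)^{n_j})$ with $\sum n_j\deg g_j=\lambda_p(f)$; taking $\Gamma$-invariants (equivalently, specializing $T\mapsto 0$) and chasing the control map $\op{Sel}^{\op{Gr}}(\Q, A_f)\to \op{Sel}^{\op{Gr}}(\Q_{\op{cyc}}, A_f)^{\Gamma}$ whose kernel and cokernel are finite, I get that the $\cO$-corank of $\op{Sel}^{\op{Gr}}(\Q, A_f)$ equals the number of $g_j$ with $g_j(0)\equiv 0 \pmod{\varpi}$, i.e.\ the multiplicity of $T$ dividing $\prod_j g_j(T)^{n_j}$. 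By Proposition~\ref{BK Greenberg comparison}, $\op{Sel}^{\op{BK}}(\Q, A_f)$ and $\op{Sel}^{\op{Gr}}(\Q, A_f)$ have the same $\cO$-corank (the comparison map in \emph{loc.\ cit.}\ being defined layer by layer, and in particular at the bottom layer having finite kernel and cokernel), so $\op{rank}^{\op{BK}}(f)$ equals that same multiplicity.

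Part~(1) is then immediate: the multiplicity of $T$ in the characteristic ideal is at most the total degree $\sum_j n_j \deg g_j = \lambda_p(f)$. For part~(2), the parity statement, I would invoke the functional equation for the $p$-adic $L$-function of $f$ (available since $f$ is $p$-ordinary and its Galois representation is self-dual up to the appropriate Tate twist), or equivalently the known parity of the order of vanishing at the central point combined with Kato's divisibility and the work relating $\op{rank}^{\op{BK}}(f)$ to this order of vanishing. Concretely, the characteristic polynomial $\prod_j g_j(T)^{n_j}$ satisfies a functional equation under $T\mapsto (1+T)^{-1}-1$; writing $\lambda_p(f)=\op{rank}^{\op{BK}}(f) + 2m + \delta$ where $2m+\delta$ counts the non-trivial zeros (those $\neq 0$) with multiplicity, the functional equation pairs up non-central zeros, forcing $\delta$ to be even, hence $\lambda_p(f)\equiv \op{rank}^{\op{BK}}(f)\pmod 2$. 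Part~(3) follows formally: if $\lambda_p(f)\leq 1$, then by~(1) $\op{rank}^{\op{BK}}(f)\in\{0,1\}$ with $\op{rank}^{\op{BK}}(f)\leq \lambda_p(f)$, and by~(2) they have the same parity, so they must be equal.

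The main obstacle I anticipate is making the parity assertion in part~(2) fully rigorous: this requires either citing a clean statement of the $p$-adic functional equation for $\op{Sel}(\Q_{\op{cyc}}, A_f)$ (or its characteristic ideal) in the literature — e.g.\ in the style of the Greenberg--Guo parity results or the work of Nekov\'a\v{r} on Selmer complexes and parity — or deducing it from the sign in the functional equation of the classical $L$-function of $f$ via the Main Conjecture (known up to $\mu=0$ by Kato and Skinner--Urban in the ordinary case). I would take the shortest available route, namely quoting the parity of the $\cO$-corank of the Bloch--Kato Selmer group against the order of vanishing of the characteristic ideal at $T=0$ from an existing reference, and reduce~(2) to the statement that non-central zeros of the characteristic polynomial come in pairs. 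Parts~(1) and~(3) are then routine.
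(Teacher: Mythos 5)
Your proposal is correct and, at bottom, follows the same route as the paper: part (1) via a control-theorem argument (the paper cites Ochiai's Theorem A), part (2) via a parity/functional-equation argument (the paper cites Greenberg, Proposition 3.10, an argument based on the Cassels--Tate/Flach pairing giving the algebraic functional equation for the characteristic ideal under $\gamma\mapsto\gamma^{-1}$), and part (3) as a formal consequence. Two small remarks worth flagging. First, in your control-theorem step the condition ``$g_j(0)\equiv 0 \pmod{\varpi}$'' is vacuous: every distinguished polynomial has all non-leading coefficients divisible by $\varpi$, so this holds for every $g_j$; what you want is $g_j(T)=T$, equivalently $g_j(0)=0$ exactly, and the $\cO$-corank of the coinvariants is then the number of such indices $j$ (not the multiplicity of $T$ in the characteristic ideal, since $\Lambda/(T^{n_j})$ has $\Gamma$-coinvariants of $\cO$-rank $1$ regardless of $n_j$). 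This does not affect the inequality in (1). Second, for (2) your first suggestion (analytic functional equation of the $p$-adic $L$-function plus the Main Conjecture) is a detour and imports hypotheses the paper does not need; your ``concretely'' paragraph, which works directly with the algebraic functional equation $T\mapsto(1+T)^{-1}-1$ for the characteristic polynomial, is exactly Greenberg's argument and is the route the paper takes, so that is the one you should retain.
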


\begin{proof}
Part \eqref{prop 3.2 p1} follows as a direct consequence of \cite[Theorem A]{ochiai2000control}. Part \eqref{prop 3.2 p2} follows from a standard argument due to Greenberg \cite[Proposition 3.10]{Gre99}, see \cite[p.1288, proof of Theorem 5.7, ll. 9-15]{hatley2019arithmetic}. Part \eqref{prop 3.2 p3} is a direct consequence of \eqref{prop 3.2 p1} and \eqref{prop 3.2 p2}.
\end{proof}

\begin{remark}\label{only remark}We note that the conventions used here are the same as those in \cite[section 2]{greenberg2000iwasawa}. Note that the ring $\cO$ is chosen to be the valuation ring of $F_{\p}$. It is easy to see that the definition of the $\lambda$-invariant $\lambda_p(f)$ is independent of the choice of field $F$, and thus the valuation ring $\cO$. Also, we note that the $\mu$-invariant vanishes for the Selmer group over $\cO$, if and only if it vanishes after base-change by any valuation ring $\cO'/\cO$. 
\end{remark}
\section{Congruence relations between Selmer groups}\label{s 4}
\par We recall some results of Greenberg and Vatsal \cite{greenberg2000iwasawa}, and apply these results to study the structure of Selmer groups of ordinary Galois representations. Let $g$ be a Hecke newform of weight $2$ on $\Gamma_0(N_g)$. Here, $N_g$ is the level of $g$. Assume that the following conditions are satisfied
\begin{enumerate} 
\item $p\nmid N_g$,
\item $g$ is ordinary at $p$,
\item $\bar{\rho}:=\bar{\rho}_g$ is absolutely irreducible,
\item $g$ has optimal level, i.e., $N_g$ is the prime to $p$ part of the Artin conductor of $\bar{\rho}$. 
\end{enumerate}
Let $f$ be a Hecke new cuspform of weight $2$ on such that $\bar{\rho}_f\simeq \bar{\rho}_g$. The understanding here is that the field $F$ is chosen so that all the Fourier coefficients of both $f$ and $g$ are contained in $F$. Remark \ref{only remark} establishes that one may choose any large field $F$ when studying the $\lambda$-invariant or the vanishing of the $\mu$-invariant. Let $N_f$ be the level of $f$. We assume that $p\nmid N_f$. Note that by Theorem \ref{carayol thm}, $N_f\in \mathcal{S}(\bar{\rho})$. We note that $N_g$ divides $N_f$. We note that $f$ has ordinary reduction at $p$ (cf. \cite[Lemma 3.3]{ray2023constructing}). Let $\epsilon_f$ be the nebentype of $f$, and $\bar{\epsilon}_f:\op{Gal}(\bar{\Q}/\Q)\rightarrow \kappa^\times$ its mod-$\varpi$ reduction. We find that $\op{det}\bar{\rho}_g=\bar{\chi}$, and $\op{det}\bar{\rho}_g=\bar{\chi}\bar{\epsilon}_f$ where $\bar{\chi}$ is the mod-$p$ cyclotomic character. Since $\bar{\rho}_f\simeq \bar{\rho}_g$, we find that $\bar{\epsilon}_f=1$. The principal units in $\cO^\times$ are the units that reduce to $1$ modulo $(\varpi)$. It is clear that if a principal unit is a root of unity, then it is equal to $1$. Since $\epsilon_f$ is a finite order character, $\bar{\epsilon}_f=1$ implies that $\epsilon_f=1$. Therefore, $f$ is a weight $2$ cuspform on $\Gamma_0(N_f)$.

\begin{theorem}[Greenberg-Vatsal \cite{greenberg2000iwasawa}]\label{GV thm}
   With respect to above notation, assume that $\mu_p(g)=0$. Then, $\mu_p(f)=0$, and 
    \[\lambda_p(f)-\lambda_p(g)=\sum_{\ell\mid N_f} \left(\sigma_\ell(g)-\sigma_\ell(f)\right),\] where the sum ranges over the primes $\ell$ dividing $N_f$. 
\end{theorem}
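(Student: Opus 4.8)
The plan is to follow Greenberg and Vatsal: compare $\op{Sel}(\Q_{\op{cyc}},A_f)$ with its \emph{non-primitive} analogue, in which all local conditions away from $p$ are dropped, and then exploit that the latter is controlled, up to a finite $\bar\rho$-dependent error, by the residual representation alone.

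First I would fix the finite set $\Sigma_0:=\{\ell:\ell\mid N_f\}$ (which contains every prime dividing $N_g$, since $N_g\mid N_f$), put $\Sigma:=\Sigma_0\cup\{p,\infty\}$, and for $M\in\{A_f,\,A_f[\varpi],\,A_g,\,A_g[\varpi]\}$ define
\[\op{Sel}^{\Sigma_0}(\Q_{\op{cyc}},M):=\ker\!\Big(H^1(\Q_\Sigma/\Q_{\op{cyc}},M)\longrightarrow \cH_p(\Q_{\op{cyc}},M)\Big),\]
imposing only the Greenberg ordinary condition at $p$. Following \cite[\S2]{greenberg2000iwasawa}, the localization sequence
\[0\to\op{Sel}(\Q_{\op{cyc}},A_f)\to\op{Sel}^{\Sigma_0}(\Q_{\op{cyc}},A_f)\to\bigoplus_{\ell\in\Sigma_0}\cH_\ell(\Q_{\op{cyc}},A_f)\to 0\]
is exact, surjectivity on the right coming from $H^2(\Q_\Sigma/\Q_{\op{cyc}},A_f)=0$ together with $\Lambda$-cotorsionness of the primitive Selmer group (by Kato \cite{kato2004p}); the same holds for $g$. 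Since each $\cH_\ell(\Q_{\op{cyc}},A_f)$ with $\ell\neq p$ is cofinitely generated over $\Z_p$ with $\mu$-invariant $0$ and $\lambda$-invariant $\sigma_\ell(f)$ by \eqref{sigma formula}, and since $\mu$ and $\lambda$ are additive in short exact sequences of cotorsion $\Lambda$-modules, the non-primitive Selmer group is $\Lambda$-cotorsion with
\[\mu\big(\op{Sel}^{\Sigma_0}(\Q_{\op{cyc}},A_f)\big)=\mu_p(f),\qquad \lambda\big(\op{Sel}^{\Sigma_0}(\Q_{\op{cyc}},A_f)\big)=\lambda_p(f)+\sum_{\ell\in\Sigma_0}\sigma_\ell(f),\]
and likewise for $g$.

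Next — and this is the heart of the matter — I would show that the non-primitive Selmer group is governed by $\bar\rho:=\bar\rho_f\cong\bar\rho_g$. Absolute irreducibility of $\bar\rho$ gives $A_f^{\op{Gal}(\bar\Q/\Q_{\op{cyc}})}=0$, and the residual ordinary quotient $\bar D$ of $\bar\rho|_{\op{G}_p}$ depends only on $\bar\rho$ (using that $f,g$ are $p$-ordinary of weight $2$ with trivial nebentype, so $\bar D$ is the unramified line cut out by the reduction of the unit root and $\bar a_p(f)=\bar a_p(g)$). Feeding $0\to A_f[\varpi]\to A_f\xrightarrow{\ \varpi\ }A_f\to 0$ into Galois cohomology and matching the condition at $p$ then produces a map
\[\op{Sel}^{\Sigma_0}(\Q_{\op{cyc}},A_f[\varpi])\longrightarrow\op{Sel}^{\Sigma_0}(\Q_{\op{cyc}},A_f)[\varpi]\]
whose kernel and cokernel have size bounded purely in terms of $\bar\rho$ and $\bar D$. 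Invoking \cite[\S2]{greenberg2000iwasawa} once more, $\op{Sel}^{\Sigma_0}(\Q_{\op{cyc}},A_f)^\vee$ has no nonzero finite $\Lambda$-submodule; hence $\mu\big(\op{Sel}^{\Sigma_0}(\Q_{\op{cyc}},A_f)\big)=0$ exactly when $\op{Sel}^{\Sigma_0}(\Q_{\op{cyc}},A_f[\varpi])$ is finite, and in that case $\op{Sel}^{\Sigma_0}(\Q_{\op{cyc}},A_f)^\vee$ is $\cO$-free, so that
\[\lambda_p(f)+\sum_{\ell\in\Sigma_0}\sigma_\ell(f)=\dim_\kappa\op{Sel}^{\Sigma_0}(\Q_{\op{cyc}},A_f)[\varpi].\]
Since $A_f[\varpi]\cong\bar\rho\cong A_g[\varpi]$ as $\op{Gal}(\Q_\Sigma/\Q)$-modules with matching residual ordinary filtrations, $\op{Sel}^{\Sigma_0}(\Q_{\op{cyc}},A_f[\varpi])=\op{Sel}^{\Sigma_0}(\Q_{\op{cyc}},A_g[\varpi])$, and the $\bar\rho$-dependent corrections above are the same for $f$ and for $g$.

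Finally I would assemble the pieces. From $\mu_p(g)=0$ and the localization sequence for $g$ we get $\mu\big(\op{Sel}^{\Sigma_0}(\Q_{\op{cyc}},A_g)\big)=0$, so $\op{Sel}^{\Sigma_0}(\Q_{\op{cyc}},A_g[\varpi])$ — hence $\op{Sel}^{\Sigma_0}(\Q_{\op{cyc}},A_f[\varpi])$ — is finite; running the previous step in reverse for $f$ yields $\mu\big(\op{Sel}^{\Sigma_0}(\Q_{\op{cyc}},A_f)\big)=0$, and therefore $\mu_p(f)=0$. With both $\mu$-invariants vanishing, the two displayed expressions for the $\kappa$-dimension of the residual non-primitive Selmer group agree (as do the correction terms), giving
\[\lambda_p(f)+\sum_{\ell\in\Sigma_0}\sigma_\ell(f)=\lambda_p(g)+\sum_{\ell\in\Sigma_0}\sigma_\ell(g),\]
which rearranges to the asserted identity since $\Sigma_0=\{\ell:\ell\mid N_f\}$. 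The hard part will be the structural input from \cite[\S2]{greenberg2000iwasawa}: the absence of finite $\Lambda$-submodules in the dual of the non-primitive Selmer group, and the control comparison identifying its $\lambda$-invariant with the $\kappa$-dimension of the residual non-primitive Selmer group. This is exactly where $p$-ordinariness and the absolute irreducibility of $\bar\rho$ are indispensable, and where the finite, $\bar\rho$-dependent discrepancies at $p$ must be tracked carefully; the remainder is bookkeeping with the localization sequences and \eqref{sigma formula}.
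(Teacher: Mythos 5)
The paper itself supplies no proof of this statement: Theorem~\ref{GV thm} is quoted verbatim from Greenberg--Vatsal \cite{greenberg2000iwasawa} (whose argument, written for elliptic curves, is extended to ordinary modular forms by Emerton--Pollack--Weston \cite{emerton2006variation}, which the paper cites elsewhere). So there is nothing in the paper to compare your proof against; what you have done is attempt to reconstruct the argument behind the citation, and your reconstruction is a faithful outline of the Greenberg--Vatsal method.

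The key ingredients are all correctly identified: passage to the non-primitive Selmer group obtained by relaxing the local conditions away from $p$; exactness of the localization sequence (here the surjectivity on the right is a Cassels--Poitou--Tate/global duality argument, for which $H^2(\Q_\Sigma/\Q_{\op{cyc}},A_f)=0$ and $\Lambda$-cotorsionness of the primitive Selmer group are what one needs, as you indicate); additivity of $\mu$ and $\lambda$ across short exact sequences of cotorsion modules, which converts $\sigma_\ell$ into a $\lambda$-correction term; the absence of nonzero finite $\Lambda$-submodules in $\op{Sel}^{\Sigma_0}(\Q_{\op{cyc}},A_f)^\vee$, so that $\mu=0$ forces the dual to be $\cO$-free of rank $\lambda$; and finally the identification of the residual non-primitive Selmer groups for $f$ and $g$, which uses $A_f[\varpi]\cong A_g[\varpi]$ as $\op{Gal}(\Q_\Sigma/\Q)$-modules together with the matching of the residual ordinary filtrations at $p$.

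Two places where you should tighten the argument when writing it out in full. First, the claim that the comparison map $\op{Sel}^{\Sigma_0}(\Q_{\op{cyc}},A_f[\varpi])\to\op{Sel}^{\Sigma_0}(\Q_{\op{cyc}},A_f)[\varpi]$ has kernel and cokernel controlled ``purely in terms of $\bar\rho$ and $\bar D$'' is the right thing to prove, but in the references it is in fact an \emph{isomorphism} once $A_f^{G_{\Q_{\op{cyc}}}}=0$ (irreducibility, plus oddness of $p$ so that restriction to the pro-$p$ extension $\Q_{\op{cyc}}$ preserves irreducibility) and a suitable local vanishing at $p$ hold; you need to verify that whatever correction terms remain are computed from $(\bar\rho,\bar D)$ alone, not from $V_f$ or $D$, or the cancellation between $f$ and $g$ is unjustified. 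Second, the matching of residual ordinary lines: since both $f$ and $g$ are $p$-ordinary of weight $2$ with trivial nebentype and $\bar\rho_f\simeq\bar\rho_g$, the mod-$\varpi$ unramified quotient of $\bar\rho|_{\op{G}_p}$ is the same for both, and this is what guarantees $\cH_p(\Q_{\op{cyc}},A_f[\varpi])=\cH_p(\Q_{\op{cyc}},A_g[\varpi])$ under the chosen identification; this should be stated explicitly rather than left implicit in the phrase ``matching residual ordinary filtrations''. With those details supplied, the bookkeeping you lay out indeed yields $\mu_p(f)=0$ and the stated $\lambda$-identity.
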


We study the numbers $\left(\sigma_\ell(g)-\sigma_\ell(f)\right)$ for primes $\ell|N_f$. Let $A$ be a local $\cO$-algebra, with maximal ideal $\mathfrak{m}_A$, and residue field $A/\mathfrak{m}_A$ isomorphic to $\kappa$. Fix an $\cO$-algebra isomorphism $\varphi_0: A/\mathfrak{m}_A\rightarrow \kappa$. Let $\varphi:A\rightarrow \kappa$ be the map obtained upon composing the reduction modulo $\mathfrak{m}_A$ map, with $\varphi_0$. We set \[\widehat{\op{GL}}_2(A):=\left\{\mtx{a}{b}{c}{d}\in\op{GL}_2(A)\mid  \mtx{\varphi(a)}{\varphi(b)}{\varphi(c)}{\varphi(d)}=\mtx{1}{0}{0}{1}\right\}.\] 
For $n\geq 1$, the map $\varphi:\cO/\varpi^n\rightarrow \kappa$ is the reduction modulo $\varpi$ map.

\begin{definition}Associated with $g$, let $\Pi_g$ be the set of prime numbers $\ell$ such that the following conditions are satisfied:
\begin{enumerate}
    \item $\ell\nmid N_g p$, 
    \item $\ell\not\equiv \pm 1\mod{p}$ and $\ell^{p-1}\not\equiv 1\mod{p^2}$,
    \item$\bar{\rho}_g(\op{fr}_\ell)=\mtx{\ell}{0}{0}{1}$ for a suitable choice of basis for the residual representation. 
\end{enumerate}
Let $\Omega_g$ be the set of prime numbers such that 
\begin{enumerate}
    \item $\ell\nmid N_g p$, 
    \item $\ell\not\equiv \pm 1\mod{p}$,  \item$\bar{\rho}_g(\op{fr}_\ell)=\mtx{-\ell}{0}{0}{-1}$ for a suitable choice of basis for the residual representation. 
\end{enumerate}
\end{definition}

In section \ref{s 5}, we shall introduce further assumptions on the image of $\bar{\rho}_g$, so that the sets $\Pi_g$ and $\Omega_g$ have positive density. For $\ell\in \Pi_g$ (resp. $\Omega_g$), we note that $\rho_g$ is unramified at $\ell$. Since $\bar{\rho}_f\simeq \bar{\rho}_g$, we find that $\bar{\rho}_f$ is unramified at $\ell$ as well. Thus, $\rho_f(\op{I}_\ell)$ is contained in $\widehat{\op{GL}}_2(\cO)$, which is a pro-$p$ group. Hence, $\rho_{f|\ell}$ factors through $\op{Gal}(\Q_\ell^{p-tr}/\Q_\ell)$, where $\Q_\ell^{p-tr}\subset \bar{\Q}_\ell$ is the maximal tamely ramified extension of $\Q_\ell$ with pro-$p$ inertia. The Galois group $\op{Gal}(\Q_\ell^{p-tr}/\Q_\ell)$ is generated by a frobenius $\op{fr}_\ell$, and the pro-$p$ tame inertia generator $\tau_\ell$, subject to the relation $\op{fr}_\ell \tau_\ell \op{fr}_\ell^{-1}=\tau_\ell^\ell$ (cf. for instance \cite[p.123, ll.15-16]{ramakrishna2002deforming}).

\par Let $n\in \Z_{\geq 1}$, and consider the reduction mod-$\varpi^n$ map
\[\pi_n:\op{GL}_2(\cO/\varpi^{n+1})\rightarrow \op{GL}_2(\cO/\varpi^{n}).\]
Given a ring $A$, denote by $\op{M}_2(A)$ the group of $2\times 2$ matrices with entries in $A$. A matrix $M$ in the kernel of $\pi_n$ can be written in the following form
\[M=\op{Id}+\varpi^n\mtx{a}{b}{c}{d}.\]Here, $\mtx{a}{b}{c}{d}$ belongs to $\op{M}_2(\cO/\varpi^{n+1})$. However, since the coefficients of the matrix $M$ belong to $\cO/\varpi^{n+1}$, it follows that $a,b, c,d$ are determined modulo $\varpi$. Thus the matrix $\mtx{a}{b}{c}{d}$ can be assumed to be in $\op{M}_2(\cO/\varpi)$.
\par To illustrate this, consider an example in the case when $n=1$, and $\mtx{a}{b}{c}{d}=\mtx{1}{0}{0}{0}$. We find that 
\[M=\op{Id}+\varpi\mtx{1}{0}{0}{0}=\mtx{1+\varpi}{0}{0}{1}.\] On the other hand, 
\[\op{Id}+\varpi\mtx{1+\varpi}{0}{0}{0}=\mtx{1+\varpi+\varpi^2}{0}{0}{1}=\mtx{1+\varpi}{0}{0}{1}=M,\]since $\varpi^2=0$ when the coefficients are in $\cO/\varpi^2$.  

\par For $i=1, 2$ and $M_i\in \op{ker}\pi_n$, and let $B_i\in M_2(\cO/\varpi)$ be such that $M_i=\op{Id}+\varpi^n B_i$. Then, since $2n\geq n+1$, we have $\varpi^{2n}=0$. Therefore, we find that
\[M_1 M_2=\left(\op{Id}+\varpi^n B_1\right)\left(\op{Id}+\varpi^n B_2\right)=\op{Id}+\varpi^n(B_1+B_2).\]
The inverse of $M=\op{Id}+\varpi^n B$ is given by $M^{-1}=\op{Id}-\varpi^n B$. It is easy to see that the map
\[\op{M}_2(\cO/\varpi)\rightarrow \op{ker}\pi_n,\]mapping $B$ to $\op{Id}+\varpi^n B$ is an isomorphism of abelian groups. Let $B\in \op{ker}\pi_n$ and $M\in \op{GL}_2(\cO/\varpi^{n+1})$. We write $B=\op{Id}+\varpi^n B'$, where $B'=\mtx{a}{b}{c}{d}\in \op{M}_2(\cO/\varpi)$. We write \[[B', M]:=B'\bar{M}-\bar{M}B'\in \op{M}_2(\cO/\varpi),\] where $\bar{M}$ is the reduction of $M$ modulo $\varpi$.  

\begin{lemma}\label{BMB^{-1}lemma}
    With respect to notation above, 
    \[B M B^{-1}=M+\varpi^n [B',M].\]
\end{lemma}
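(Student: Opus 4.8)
The plan is to prove this by a direct matrix computation, using the two preceding paragraphs which already established the key algebraic simplification: that when working in $\op{GL}_2(\cO/\varpi^{n+1})$, any element of $\op{ker}\pi_n$ is of the form $\op{Id}+\varpi^n B'$ with $B'\in\op{M}_2(\cO/\varpi)$, and crucially that $\varpi^{2n}=0$ in $\cO/\varpi^{n+1}$ since $2n\geq n+1$.

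\begin{proof}
Write $B=\op{Id}+\varpi^n B'$ with $B'\in \op{M}_2(\cO/\varpi)$, so that $B^{-1}=\op{Id}-\varpi^n B'$ as noted above. Since $\varpi^{n}M=\varpi^n\bar M$ (the entries of $\varpi^n M$ depend only on $M$ modulo $\varpi$), and since $\varpi^{2n}=0$ in $\cO/\varpi^{n+1}$, we compute
\[
BMB^{-1}=(\op{Id}+\varpi^n B')M(\op{Id}-\varpi^n B')=M+\varpi^n B'\bar M-\varpi^n \bar M B'-\varpi^{2n}B'\bar M B'=M+\varpi^n(B'\bar M-\bar M B').
\]
By definition $[B',M]=B'\bar M-\bar M B'$, which gives $BMB^{-1}=M+\varpi^n[B',M]$, as claimed.
\end{proof}

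There is essentially no obstacle here: the only subtlety is bookkeeping about which reductions matter, and that has already been handled in the run-up to the lemma. The single point worth being careful about is the identity $\varpi^n M=\varpi^n\bar M$, i.e. that multiplying $M$ by $\varpi^n$ annihilates everything in $M$ except its residue modulo $\varpi$; this is exactly the observation illustrated in the $n=1$ example just above the lemma statement. Everything else is the expansion of the product and the vanishing of the $\varpi^{2n}$ term.
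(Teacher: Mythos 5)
Your proof is correct and takes essentially the same approach as the paper: expand $(\op{Id}+\varpi^n B')M(\op{Id}-\varpi^n B')$, use $\varpi^{2n}=0$ in $\cO/\varpi^{n+1}$ to kill the last term, and note that multiplication by $\varpi^n$ only sees $M$ modulo $\varpi$. You simply write out the intermediate steps that the paper compresses into a single line.
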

\begin{proof}
    We find that since $\varpi^{n+1}=0$, in particular, $\varpi^{2n}=0$. Therefore, we find that
    \[\begin{split}B M B^{-1} &=(\op{Id}+\varpi^nB')M (\op{Id}-\varpi^nB')\\
    &=M+\varpi^n[B', M].\end{split}\] Note that $\varpi^n [B', \bar{M}]$ is determined by the reduction of $M$ modulo $\varpi$.
\end{proof}

\begin{lemma}\label{lemma 4.4}
   Let $\ell$ be a prime such that 
   \begin{enumerate}
       \item $\ell\nmid N_g p$,
       \item $\ell\nmid \pm 1\mod{p}$, 
       \item $\bar{\rho}_g(\op{fr}_\ell)=\mtx{\eta\ell}{}{}{\eta}$, where $\eta=\pm 1$,
       \item $\rho_f$ is ramified at $\ell$.
   \end{enumerate} Then, up to conjugation by a matrix in $\op{GL}_2(\cO)$,
    \[\rho_f(\op{fr}_\ell)=\mtx{\ell x}{}{}{x}\text{ and } \rho_f(\tau_\ell)=\mtx{1}{y}{}{1},\] where $x\equiv \eta \mod{\varpi}$ and $y\equiv 0\mod{\varpi}$.
\end{lemma}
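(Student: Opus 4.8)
The plan is to exploit the constraints imposed on $\rho_f$ by the structure of the tame quotient $\op{Gal}(\Q_\ell^{p\text{-tr}}/\Q_\ell)$, which as noted in the text is topologically generated by $\op{fr}_\ell$ and $\tau_\ell$ subject to $\op{fr}_\ell\tau_\ell\op{fr}_\ell^{-1}=\tau_\ell^\ell$. Since $\ell\in\Pi_g\cup\Omega_g$ forces $\rho_g$ (hence $\bar\rho_f$) to be unramified at $\ell$, we know $\rho_f(\op{I}_\ell)\subseteq\widehat{\op{GL}}_2(\cO)$, a pro-$p$ group, so $\tau_\ell$ must map into this group; write $\rho_f(\tau_\ell)=\op{Id}+\varpi^n B'$ for the largest $n$ with $\rho_f(\tau_\ell)\notin\op{ker}\pi_{n}$ viewed one level down — concretely, let $n\geq 1$ be minimal so that $\rho_f(\tau_\ell)$ is nontrivial modulo $\varpi^{n+1}$, and let $\bar B'\in\op{M}_2(\cO/\varpi)$ be the corresponding nonzero matrix. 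First I would reduce everything modulo $\varpi^{n+1}$ and apply Lemma \ref{BMB^{-1}lemma}: the braid relation $\rho_f(\op{fr}_\ell)\rho_f(\tau_\ell)\rho_f(\op{fr}_\ell)^{-1}=\rho_f(\tau_\ell)^\ell$ becomes, modulo $\varpi^{2n}=0$, the identity $\op{Id}+\varpi^n[\bar B',\overline{\rho_f(\op{fr}_\ell)}]=\op{Id}+\varpi^n(\ell\bar B')$, i.e. $[\bar B',\bar\rho_g(\op{fr}_\ell)]=\ell\,\bar B'$ in $\op{M}_2(\cO/\varpi)$ (using $\bar\rho_f=\bar\rho_g$).

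Next I would solve this eigenvector-type equation. Writing $\bar\rho_g(\op{fr}_\ell)=\op{diag}(\eta\ell,\eta)$ with $\eta=\pm1$, the operator $X\mapsto X\cdot\op{diag}(\eta\ell,\eta)-\op{diag}(\eta\ell,\eta)\cdot X$ on $2\times 2$ matrices is diagonalized by the elementary matrices $E_{ij}$, with eigenvalues $0,0,\eta\ell-\eta\cdot\text{(other)}$: precisely $E_{11}\mapsto 0$, $E_{22}\mapsto 0$, $E_{12}\mapsto(\eta-\eta\ell)E_{12}$, $E_{21}\mapsto(\eta\ell-\eta)E_{21}$. So we need $\bar B'$ to lie in the $\ell$-eigenspace; since $\ell\not\equiv 0,\pm1\pmod p$ (and $\eta^2=1$), the values $0$, $\eta(1-\ell)$, $\eta(\ell-1)$ are all distinct from $\ell$ modulo $p$ unless one of $\eta(1-\ell)\equiv\ell$ or $\eta(\ell-1)\equiv\ell$ holds. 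One checks these cases: for $\eta=1$ the first gives $1\equiv 2\ell$ and the second $2\ell\equiv 1$ — wait, I must be careful, I would instead recompute the eigenvalue attached to $E_{21}$ as $\eta\ell-\eta=\eta(\ell-1)$ and ask when this equals the scalar $\ell$ we want; in general the right normalization is that the braid relation up to the change of variables in Lemma \ref{lemma 4.4} pins $\bar B'$ to be a nonzero multiple of $E_{12}$ (conjugating if necessary by the permutation matrix to swap $E_{12}\leftrightarrow E_{21}$). Thus after conjugating by a matrix in $\op{GL}_2(\cO)$ reducing to a permutation/diagonal matrix, $\rho_f(\tau_\ell)=\op{Id}+\varpi^n\op{diag-offdiag}$, i.e. $\rho_f(\tau_\ell)=\mtx{1}{y}{}{1}$ with $y\equiv 0\pmod\varpi$ (indeed $y=\varpi^n u$ for a unit $u$, but only $y\equiv 0$ is claimed).

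Finally I would deal with $\rho_f(\op{fr}_\ell)$. Having arranged $\rho_f(\tau_\ell)$ to be strictly upper triangular unipotent, the full braid relation $\rho_f(\op{fr}_\ell)\rho_f(\tau_\ell)\rho_f(\op{fr}_\ell)^{-1}=\rho_f(\tau_\ell)^\ell$ now holds in $\op{GL}_2(\cO)$ (not just mod $\varpi^{n+1}$); writing $\rho_f(\op{fr}_\ell)=\mtx{a}{b}{c}{d}$, conjugating the matrix $\mtx{1}{y}{}{1}$ and comparing with $\mtx{1}{\ell y}{}{1}$ forces $c y^2=0$ and $(a-d)y=(\ell-1)dy$-type relations; since $y\neq 0$ and $\cO$ is a domain this gives $c=0$ and $a=\ell d$, so $\rho_f(\op{fr}_\ell)=\mtx{\ell x}{*}{}{x}$ with $x=d$; a further conjugation by an upper-triangular unipotent matrix (which commutes with $\tau_\ell$ up to the same shape) clears the upper-right entry, and reducing mod $\varpi$ and comparing with $\bar\rho_g(\op{fr}_\ell)=\op{diag}(\eta\ell,\eta)$ gives $x\equiv\eta\pmod\varpi$. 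The main obstacle is the bookkeeping in the eigenspace computation of the previous paragraph: one must verify that $\ell\not\equiv\pm1\pmod p$ together with $p\geq 5$ genuinely excludes all the degenerate coincidences of eigenvalues that would allow $\bar B'$ to have a nonzero diagonal or lower-triangular component, so that the conjugation really can be taken in $\op{GL}_2(\cO)$ and lands $\rho_f(\tau_\ell)$ in the stated normal form.
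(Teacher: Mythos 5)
There are two genuine gaps. First, you misapply Lemma \ref{BMB^{-1}lemma}: in the tame relation $\rho_f(\op{fr}_\ell)\rho_f(\tau_\ell)\rho_f(\op{fr}_\ell)^{-1}=\rho_f(\tau_\ell)^{\ell}$ it is the kernel element $T:=\rho_f(\tau_\ell)=\op{Id}+\varpi^n B'$ that gets conjugated by the general matrix $F:=\rho_f(\op{fr}_\ell)$, whereas the lemma computes $BMB^{-1}$ with $B$ in the kernel of reduction. Modulo $\varpi^{n+1}$ the correct identity is $FTF^{-1}=\op{Id}+\varpi^n\,\bar F\bar B'\bar F^{-1}$, so the constraint is the conjugation (eigenvector) equation $\bar F\bar B'\bar F^{-1}=\ell\,\bar B'$, not the commutator equation $[\bar B',\bar F]=\ell\,\bar B'$ that you wrote down. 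With $\bar F=\mtx{\eta\ell}{}{}{\eta}$ the conjugation action has eigenvalues $1,1,\ell,\ell^{-1}$ on $E_{11},E_{22},E_{12},E_{21}$, and since $\ell\not\equiv\pm 1\mod{p}$ the only eigenvalue equal to $\ell$ is the one on $E_{12}$; hence $\bar B'$ is forced to be a nonzero multiple of $E_{12}$, with no ``degenerate coincidences'' to exclude and no permutation conjugation needed (conjugating by the permutation matrix would in any case destroy the prescribed diagonal form of $\bar\rho_g(\op{fr}_\ell)$). The cases $\eta(1-\ell)\equiv\ell$ and $\eta(\ell-1)\equiv\ell$ that you worry about are artifacts of the wrong equation and are not excluded by the hypotheses, so as written this step does not close.

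Second, and more seriously, your treatment of $\rho_f(\op{fr}_\ell)$ begins ``having arranged $\rho_f(\tau_\ell)$ to be strictly upper triangular unipotent,'' but the preceding argument only controls $\rho_f(\tau_\ell)$ modulo $\varpi^{n+1}$: you have identified its leading term as a multiple of $E_{12}$, not shown that a single conjugation in $\op{GL}_2(\cO)$ puts it exactly in the form $\mtx{1}{y}{}{1}$ over $\cO$. Bridging that gap is the actual content of the paper's proof, which constructs a compatible system of conjugators $A_n\in\widehat{\op{GL}}_2(\cO/\varpi^n)$, at each stage first diagonalizing Frobenius (using $\ell\not\equiv 1\mod{p}$) and then using the tame relation to kill the extra entries in the image of $\tau_\ell$ (using $\ell\not\equiv\pm 1\mod{p}$), and finally passes to the inverse limit. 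If you wish to keep your order of operations (normalize $\tau_\ell$ first), you must supply an argument that $T$ is genuinely unipotent over $\cO$ --- for instance, the relation forces any eigenvalue $\lambda$ of $T$, which is a $1$-unit, to satisfy $\lambda^{\ell^2-1}=1$, hence $\lambda=1$ since $p\nmid \ell^2-1$ --- and that its fixed line is an $\cO$-direct summand of $T_f$, so that an adapted basis can be chosen in $\op{GL}_2(\cO)$. Only with such an input does your final computation with the relation over $\cO$ (which is otherwise essentially correct, including clearing the upper-right entry of Frobenius by a unipotent conjugation using $\ell\not\equiv 1\mod{p}$) go through.
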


\begin{proof}
We choose an integral basis for $T_f$ so that $\bar{\rho}_f(\op{fr}_\ell)=\mtx{\eta\ell}{0}{0}{\eta}$. Since $\ell\nmid N_g p$, $\bar{\rho}_g$ is unramified at $\ell$. Since $f$ and $g$ satisfy the congruence $\bar{\rho}_f\simeq \bar{\rho}_g$, it follows that $\bar{\rho}_f$ is unramified at $\ell$. Since $\tau_\ell$ is in the inertia group at $\ell$, we find that $\bar{\rho}_f(\tau_\ell)=\op{Id}$. Thus, for $n=1$, the upper triangular entry is $0$.
\par We show that there is a matrix $A\in \op{GL}_2(\cO)$ so that $\rho:=A \rho_f A^{-1}$ satisfies the above conditions. In fact, the matrix $A$ can be chosen to be in $\widehat{\op{GL}}_2(\cO)$. For ease of notation, we set $\rho':=\rho_f$ and $\rho'_n$ denote the $\mod{\varpi^n}$ reduction of $\rho'$. We shall inductively specify matrices $A_n \in \widehat{\op{GL}}_2(\cO/\varpi^n)$ such that $A_m\equiv A_n \mod{\varpi^n}$ for all integers $m>n$. We shall then let $A$ denote the matrix $(A_n)$ in the inverse limit 
    \[\widehat{\op{GL}}_2(\cO)=\varprojlim_n \widehat{\op{GL}}_2(\cO/\varpi^n).\] Let us set $\rho_n:=A_n \rho_n' A_n^{-1}$. The matrices $A_n$ shall have the property that there exist $x_n, y_n, z_n\in \cO/\varpi^n$ such that $\rho_n(\op{fr}_\ell)=\mtx{\ell x_n}{}{}{z_n}$ and $\rho_n(\tau_\ell)=\mtx{1}{y_n}{}{1}$, where $x_n, z_n\equiv \eta\mod{\varpi}$ and $y_n\equiv 0\mod{\varpi}$. Since $A_m \equiv A_n \mod{\varpi^n}$ for all $m>n$, it shall then follow that $x_m\equiv x_n\mod{\varpi^n}$, $y_m\equiv y_n\mod{\varpi^n}$ and $z_m\equiv z_n\mod{\varpi^n}$. Setting $x,y,z$ to denote the inverse limits $(x_n)$, $(y_n)$ and $(z_n)$ respectively, we find that 
    \[\rho(\op{fr}_\ell)=\mtx{\ell x}{}{}{z}\text{ and }\rho(\tau_\ell)=\mtx{1}{y}{}{1}.\] The relation $\op{fr}_\ell \tau_\ell \op{fr}_\ell^{-1}=\tau_\ell^\ell$ then implies that $x=z$. 
\par Suppose that for some $n\geq 1$, 
\[\rho_n(\op{fr}_\ell)=\mtx{\ell x_n}{}{}{z_n}\text{ and }\rho_n(\tau_\ell)=\mtx{1}{y_n}{}{1}.\]
It suffices to lift $A_n$ to $A_{n+1}\in \widehat{\op{GL}}_2(\cO/\varpi^{n+1})$, so that 
\[\rho_{n+1}(\op{fr}_\ell)=\mtx{\ell x_{n+1}}{}{}{z_{n+1}}\text{ and }\rho_{n+1}(\tau_\ell)=\mtx{1}{y_{n+1}}{}{1},\]
where $x_{n+1}, y_{n+1}, z_{n+1}$ lift $x_n, y_n$ and $z_n$ respectively.
Let $B_{n+1}\in \widehat{\op{GL}}_2(\cO/\varpi^{n+1})$ be a lift of $A_n$, and set $r_{n+1}:=B_{n+1} \rho_{n+1}' B_{n+1}^{-1}$. Note that $\rho_n=r_{n+1}\mod{\varpi^n}$. In particular, $r_{n+1}(\op{fr}_\ell)$ lifts $\rho_n(\op{fr}_\ell)=\mtx{\ell x_n}{}{}{z_n}$ and $r_{n+1}(\tau_\ell)$ lifts $\rho_n(\tau_\ell)=\mtx{1}{y_n}{}{1}$. Choose lifts $\tilde{x}_n, \tilde{y}_n, \tilde{z}_n$ of $x_n, y_n, z_n$ to $\cO/\varpi^{n+1}$. Then, $\rho_n(\op{fr}_\ell)$ is $\mtx{\ell \tilde{x}_n}{}{}{\tilde{z}_n}\mod{\varpi^n}$, and this just means that there is $\mtx{a}{b}{c}{d}$ with coefficients determined mod-$\varpi$ such that \[r_{n+1}(\op{fr}_\ell)=\mtx{\ell \tilde{x}_n}{}{}{\tilde{z}_n}+\varpi^n\mtx{a}{b}{c}{d}.\] Similarly, there is $\mtx{e}{f}{g}{h}$ with coefficients determined mod-$\varpi$ such that 
    \[r_{n+1}(\tau_\ell)=\mtx{1}{\widetilde{y}_n}{0}{1}+\varpi^n\mtx{e}{f}{g}{h}.\] These coefficients $a,b,c,d,e,f,g,h\in \cO/\varpi$ are determined by $r_{n+1}$, i.e., they are determined by $\rho_{n+1}'$ and the choice of matrix $B_{n+1}$. Let \[B:=\left(\op{Id}+\varpi^n B'\right)\in \widehat{\op{GL}}_2(\cO/\varpi^{n+1}),\] and we set $r_{n+1}':=B r_{n+1} B^{-1}$. Write $B'=\mtx{a'}{b'}{c'}{d'}$, in due course, the values of $a',b',c'$ and $d'$ shall be prescribed.
    \par Let $M\in \op{GL}_2(\cO/\varpi^{n+1})$ and $B:=\op{Id}+\varpi^n B'$ in $\op{ker}\pi_n$. It follows from Lemma \ref{BMB^{-1}lemma} that  \[B M B^{-1}=M+\varpi^n [B',M].\]
 If $M\equiv \mtx{\eta\ell}{}{}{\eta}\mod{\varpi}$, then, 
    \begin{equation}\label{neweqn1}B MB^{-1}=M+\varpi^n\mtx{}{\eta(1-\ell)b'}{\eta(\ell-1)c'}{}.\end{equation}
    On the other hand, if $M\equiv \op{Id}\mod{\varpi}$, then, $[B',M]=0$, and \begin{equation}\label{neweqn2}B M B^{-1}=M.\end{equation} 
From the above computations, we find that 
    \[\begin{split} &r_{n+1}'(\op{fr}_\ell)=\mtx{\ell \tilde{x}_n}{}{}{\tilde{z}_n}+\varpi^n\mtx{a}{b+\eta(1-\ell)b'}{c+\eta(\ell-1)c'}{d};\\ 
    & r_{n+1}'(\tau_\ell)=\mtx{1}{\widetilde{y}_n}{0}{1}+\varpi^n\mtx{e}{f}{g}{h}.\end{split}\]
    The first of the above equations follows from \eqref{neweqn1} taking $M:=r_{n+1}(\op{fr}_\ell)$ and the second follows from \eqref{neweqn2} upon taking $M:=r_{n+1}(\tau_\ell)$.
    Since $\ell\not\equiv 1\mod{p}$, we may choose $B$ so that $r_{n+1}'(\op{fr}_\ell)$ is diagonal. In greater detail, $b':=\frac{b}{\eta(\ell-1)}$ and $c':=\frac{c}{\eta(1-\ell)}$, and set $a'=0$ and $d'=0$. Thus, we write \[r_{n+1}'(\op{fr}_\ell)=\mtx{\ell x_{n+1}}{}{}{z_{n+1}}.\]
    We find that 
    \[\begin{split} r_{n+1}'(\op{fr}_\ell\tau_\ell \op{fr}_\ell^{-1})= & \mtx{\ell x_{n+1}}{}{}{z_{n+1}}\mtx{1+\varpi^n e}{\widetilde{y}_n+\varpi^n f}{\varpi^n g}{1+\varpi^n h}\mtx{\ell x_{n+1}}{}{}{z_{n+1}}^{-1}, \\
   = & \mtx{1+\varpi^n e}{\ell x_{n+1}z_{n+1}^{-1}\left(\widetilde{y}_n+\varpi^n f\right)}{\varpi^n \ell^{-1}z_{n+1}x_{n+1}^{-1}g}{1+\varpi^n h},\\
   = & \mtx{1+\varpi^n e}{\ell x_{n+1}z_{n+1}^{-1}\widetilde{y}_n+\varpi^n \ell f}{\varpi^n \ell^{-1}g}{1+\varpi^n h},\\
   r_{n+1}'(\tau_\ell)^\ell=& \mtx{1+\varpi^n \ell e}{\ell \widetilde{y}_n+\varpi^n \ell f}{\varpi^n \ell g}{1+\varpi^n \ell h}.
    \end{split}\]

   We note that $x_{n+1}\equiv z_{n+1}\equiv \eta\mod{\varpi}$, and thus $x_{n+1}z_{n+1}^{-1}\equiv 1\mod{\varpi}$. The coefficients of the above matrices are in $\cO/\varpi^{n+1}$, and hence, the relations \[\varpi^n x_{n+1}z_{n+1}^{-1}=\varpi^n\text{ and }\varpi^n z_{n+1}x_{n+1}^{-1}=\varpi^n\] are used in the above equations. Since $\ell\not \equiv \pm 1\mod{p}$, we find that $e=g=h=0$. Hence, we find that \[\begin{split} &r_{n+1}'(\op{fr}_\ell)=\mtx{\ell x_{n+1}}{}{}{z_{n+1}};\\ 
    & r_{n+1}'(\tau_\ell)=\mtx{1}{y_{n+1}}{}{1}.\end{split}\]
    We set $A_{n+1}:=B B_{n+1}$, we note that $A_n\equiv B_{n+1}\mod{\varpi^n}$, and $B\equiv \op{Id}\mod{\varpi^n}$. With respect to this choice, $\rho_{n+1}:=r_{n+1}'$, and $A_n=A_{n+1} \mod{\varpi^n}$. This completes the inductive lifting argument. By previous remarks in the proof, this is enough to establish the result. 
\end{proof}

\begin{lemma}\label{lemma 4.5}
Let $\ell\nmid N_gp$ be a prime that divides $N_f$. Then, the following assertions hold.
    \begin{enumerate}
        \item\label{lemma 4.5 p1} If $\ell\in \Pi_g$, then, $\sigma_\ell(g)=1$ and $\sigma_\ell(f)=0$.
        \item\label{lemma 4.5 p2} If $\ell\in \Omega_g$, then, $\sigma_\ell(g)=0$ and $\sigma_\ell(f)=0$.
    \end{enumerate} 
\end{lemma}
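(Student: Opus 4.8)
The plan is to evaluate both sides of the formula \eqref{sigma formula}, $\sigma_\ell(\cdot)=s_\ell\,d_\ell(\cdot)$, by computing the polynomials $\widetilde{P}_\ell(g;X)$ and $\widetilde{P}_\ell(f;X)$ explicitly in each of the two cases. Throughout, I would set $\eta=1$ if $\ell\in\Pi_g$ and $\eta=-1$ if $\ell\in\Omega_g$, so that (in a suitable basis) $\bar{\rho}_g(\op{fr}_\ell)=\mtx{\eta\ell}{0}{0}{\eta}$, and the two statements become: $d_\ell(g)=1$, $s_\ell=1$ when $\ell\in\Pi_g$, while $d_\ell(g)=0$ when $\ell\in\Omega_g$; and $d_\ell(f)=0$ in both cases.

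First I would handle $g$. Since $\ell\nmid N_g p$, the representation $\rho_g$ is unramified at $\ell$, so $V_g'=V_g$ and $\widetilde{P}_\ell(g;X)=\op{det}(\op{Id}-\bar{\rho}_g(\op{fr}_\ell)X)=(1-\eta\ell X)(1-\eta X)$, with roots $(\eta\ell)^{-1}$ and $\eta^{-1}$ in $\kappa$. When $\ell\in\Pi_g$ (so $\eta=1$) the roots are $\ell^{-1}$ and $1$, and $\ell\not\equiv 1\pmod p$ makes $\ell^{-1}$ a simple root, so $d_\ell(g)=1$; moreover the condition $\ell^{p-1}\not\equiv 1\pmod{p^2}$ forces $s_\ell=1$, since by definition $p s_\ell$ is the exact power of $p$ dividing $\ell^{p-1}-1$, which is $p$ here. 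Hence $\sigma_\ell(g)=1$. When $\ell\in\Omega_g$ (so $\eta=-1$) the roots are $-\ell^{-1}$ and $-1$; since $p\geq 5$ (so $\ell^{-1}\neq-\ell^{-1}$ in $\kappa$) and $\ell\not\equiv\pm 1\pmod p$, neither root equals $\ell^{-1}$, so $d_\ell(g)=0$ and $\sigma_\ell(g)=0$.

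Next I would treat $f$. Because $\ell\mid N_f$ and the prime-to-$p$ conductor of $\rho_f$ is $N_f$ (this is built into the description of $\mathcal{S}(\bar{\rho})$ via Theorem \ref{carayol thm}), $\rho_f$ is ramified at $\ell$, so all four hypotheses of Lemma \ref{lemma 4.4} hold (with the value of $\eta$ fixed above). Thus, after conjugating by a matrix in $\op{GL}_2(\cO)$, one may write $\rho_f(\op{fr}_\ell)=\mtx{\ell x}{}{}{x}$ and $\rho_f(\tau_\ell)=\mtx{1}{y}{}{1}$ with $x\equiv\eta\pmod\varpi$, $y\equiv 0\pmod\varpi$, and $y\neq 0$. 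Since $\rho_{f|\ell}$ factors through the tame quotient whose pro-$p$ inertia is topologically generated by $\tau_\ell$, the inertia coinvariants $V_f'=(V_f)_{\op{I}_\ell}$ are the cokernel of $\rho_f(\tau_\ell)-\op{Id}=\mtx{0}{y}{0}{0}$; as $y\neq 0$ this is one-dimensional, and $\op{fr}_\ell$ acts on it by the scalar $x$. Therefore $\widetilde{P}_\ell(f;X)=1-\bar{x}X=1-\eta X$, whose unique root in $\kappa$ is $\eta\in\{1,-1\}$, and $\eta\neq\ell^{-1}$ because $\ell\not\equiv\pm 1\pmod p$. Hence $d_\ell(f)=0$ and $\sigma_\ell(f)=s_\ell\cdot 0=0$ in both cases, which together with the computation for $g$ gives \eqref{lemma 4.5 p1} and \eqref{lemma 4.5 p2}.

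The main obstacle — and the one point that must not be glossed over — is the ramification of $\rho_f$ at $\ell$. It would be wrong to argue that $\widetilde{P}_\ell(f;X)=\widetilde{P}_\ell(g;X)$ just because $\bar{\rho}_f\simeq\bar{\rho}_g$: the polynomial $P_\ell(f;X)$ is read off from the inertia coinvariants of the \emph{characteristic-zero} representation, and it is precisely the (forced) ramification of $\rho_f$ at $\ell$ — exploited through Lemma \ref{lemma 4.4} to pin down the shape of $\rho_f(\op{fr}_\ell)$ and $\rho_f(\tau_\ell)$ — that collapses $V_f'$ to a line and thereby cuts $d_\ell(f)$ down from the naive value $1$ (in the $\Pi_g$ case) to $0$. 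Everything else is bookkeeping with the eigenvalues of $\bar{\rho}_g(\op{fr}_\ell)$, the congruence conditions defining $\Pi_g$ and $\Omega_g$, and the hypothesis $p\geq 5$.
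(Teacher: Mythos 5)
Your proof is correct and follows essentially the same route as the paper's: evaluate $\sigma_\ell(\cdot)=s_\ell\,d_\ell(\cdot)$ by computing $\widetilde P_\ell(g;X)$ from the unramifiedness of $\rho_g$ at $\ell$ and $\widetilde P_\ell(f;X)$ by invoking Lemma \ref{lemma 4.4} to pin down the shape of $\rho_f$ at $\ell$ and then read off the one-dimensional inertia coinvariants. The only cosmetic differences are that you unify the two cases with the sign $\eta$, make the justification that $\rho_f$ is ramified at $\ell$ (and hence $y\neq 0$) explicit rather than implicit, and spell out the cokernel computation giving $V_f'$; these are all points the paper glosses over but that your write-up handles correctly.
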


\begin{proof}
    We begin by proving part \eqref{lemma 4.5 p1}. Since $\rho_g$ is unramified at $\ell$, we find that $V'_g=V_g$. Since $\bar{\rho}_g(\op{fr}_\ell)=\mtx{\ell}{}{}{1}$, it follows that $\widetilde{P}_\ell(g;X)=(1-X)(1-\ell X)$. Since $\ell\not\equiv 1\mod{p}$, it follows that $\ell^{-1}$ is a root of $\widetilde{P}_\ell(g;X)$ with multiplicity $1$. Therefore, $d_\ell(g)=1$. On the other hand, it follows from Lemma \ref{lemma 4.4} that $V_f'$ is the trivial $\op{G}_\ell$-module, we find that $\widetilde{P}_\ell(f;X)=1-X$, and $\ell^{-1}$ is not a root of $\widetilde{P}_\ell(f;X)$. Therefore, $d_\ell(f)=0$. Since $\ell^{p-1}\not\equiv 1\mod{p^2}$, it follows that that $s_\ell=1$. Since $\sigma_\ell(\cdot)=s_\ell d_\ell(\cdot)$, we find that $\sigma_\ell(g)=1$ and $\sigma_\ell(f)=0$.
    \par Next, we prove assertion \eqref{lemma 4.5 p2}. Since $\bar{\rho}_g(\op{fr}_\ell)=\mtx{-\ell}{}{}{-1}$, it follows that $\widetilde{P}_\ell(g;X)=(1+X)(1+\ell X)$. Thus, the roots of $\widetilde{P}_\ell(g;X)$ are $\{-1, -\ell^{-1}\}$. Since $\ell \not\equiv -1\mod{p}$, we find that $\ell^{-1}$ is not a root of $\widetilde{P}_\ell(g;X)$, and therefore, $d_\ell(g)=0$. Hence, it follows that $\sigma_\ell(g)=0$. As in the proof of part \eqref{lemma 4.5 p1}, it follows that $\widetilde{P}_\ell(f;X)=(1+X)$. Therefore, $\ell^{-1}$ is not a root of this polynomial, and it thus follows that $\sigma_\ell(f)=0$.
\end{proof}

Let $Q=\{q_1, \dots, q_n\}$ be a set of primes contained in $\Pi_g$ and $Q'=\{\ell_1, \dots, \ell_r\}$. For $(n, r)\in \Z_{\geq 0}^2$, we set $\frak{T}(n, r)$ be the collection of all sets $\Sigma=Q\cup Q'$, where $Q=\{q_1, \dots, q_n\}$ (resp. $Q'=\{\ell_1, \dots, \ell_r\}$) is a subset of $\Pi_g$ (resp. $\Omega_g$). The understanding is that when $n=0$ (resp. $r=0$), the set $Q$ (resp. $Q'$) is empty. When we write $\Sigma=\{q_1, \dots, q_n, \ell_1, \dots, \ell_r\}$, we shall implicitly mean that $\{q_1, \dots, q_n\}$ is contained in $\Pi_g$ and $\{\ell_1, \dots, \ell_r\}$ is contained in $\Omega_g$.

\begin{definition}
    For $\Sigma\in \frak{T}(n, r)$, set $N_\Sigma:=\prod_{i=1}^n q_i\times \prod_{j=1}^r \ell_j$. Let $\cF(\Sigma)$ be the set of newforms $f$ of weight $2$ such that 
    \begin{enumerate}
        \item $\bar{\rho}_f\simeq \bar{\rho}_g$, 
        \item $N_f=N_g N_\Sigma$.
    \end{enumerate}
\end{definition}

\begin{proposition}
    Let $g$ be a Hecke newform of optimal weight $k=2$ and optimal level. Then, for $(n, r)$ such that $n>0$ or $r>0$, and $\Sigma\in \frak{T}(n, r)$, the set $\cF(\Sigma)$ is nonempty.
\end{proposition}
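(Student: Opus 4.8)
\emph{Proof proposal.} The plan is to realize $M:=N_gN_\Sigma$ as an element of the admissible level set $\mathcal{S}(\bar\rho)$ for $\bar\rho:=\bar\rho_g$, and then invoke the Diamond--Taylor level‑raising theorem (Theorem \ref{DT theorem}). First I would note that the hypotheses of Theorem \ref{DT theorem} are all in force: $\bar\rho$ is (absolutely) irreducible by the hypothesis on its image, it arises from the newform $g$ on $\Gamma_1(N_{\bar\rho})=\Gamma_0(N_g)$ since $g$ has optimal level and optimal weight $k=2$, and the weight $k=2$ lies in the range $2\le k\le p-2$ because $p\ge 5$. Thus everything reduces to checking $M\in\mathcal{S}(\bar\rho)$, i.e.\ to producing a factorization $M=N_{\bar\rho}\prod_\ell \ell^{\alpha(\ell)}$ satisfying the Carayol conditions of Theorem \ref{carayol thm}.

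For the key step I would take the factorization $M=N_g\cdot\prod_{i=1}^n q_i\cdot\prod_{j=1}^r \ell_j$, i.e.\ $\alpha(\ell)=1$ for every prime $\ell\mid N_\Sigma$ and $\alpha(\ell)=0$ otherwise. Since $\Pi_g\cap\Omega_g=\varnothing$ and the chosen primes are pairwise distinct and prime to $N_gp$, the integer $N_\Sigma$ is squarefree and coprime to $N_g=N_{\bar\rho}$, so this factorization is legitimate. Every prime $\ell\mid N_\Sigma$ satisfies $\ell\nmid N_{\bar\rho}$ and $\ell\not\equiv\pm 1\bmod p$, so cases (2) and (3) of Theorem \ref{carayol thm} are excluded and only case (1) can apply; it therefore suffices to verify the identity $\ell(\op{trace}\bar\rho(\op{fr}_\ell))^2=(1+\ell)^2\op{det}\bar\rho(\op{fr}_\ell)$ in $\bar\F_p$ with $\alpha(\ell)=1$. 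For $\ell=q_i\in\Pi_g$ we have $\bar\rho(\op{fr}_{q_i})\sim\mtx{q_i}{0}{0}{1}$, so $\op{trace}=1+q_i$, $\op{det}=q_i$, and the identity reads $q_i(1+q_i)^2=(1+q_i)^2q_i$; for $\ell=\ell_j\in\Omega_g$ we have $\bar\rho(\op{fr}_{\ell_j})\sim\mtx{-\ell_j}{0}{0}{-1}$, so $\op{trace}=-(1+\ell_j)$, $\op{det}=\ell_j$, and the identity again reads $\ell_j(1+\ell_j)^2=(1+\ell_j)^2\ell_j$. Both hold trivially, so $M\in\mathcal{S}(\bar\rho)$.

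Applying Theorem \ref{DT theorem} then yields a cuspidal newform $f$ of weight $2$ on $\Gamma_1(M)$ with $\bar\rho_f=\bar\rho\simeq\bar\rho_g$; being a newform on $\Gamma_1(M)$ it has level exactly $N_f=M=N_gN_\Sigma$. Finally, as in the discussion preceding Theorem \ref{GV thm}, the congruence $\bar\rho_f\simeq\bar\rho_g$ forces $\op{det}\bar\rho_f=\bar\chi$, hence $\bar\epsilon_f=1$, and since $\epsilon_f$ is of finite order this gives $\epsilon_f=1$, so $f$ is a weight‑$2$ newform on $\Gamma_0(N_gN_\Sigma)$. By definition this means $f\in\cF(\Sigma)$, and in particular $\cF(\Sigma)\neq\varnothing$. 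The main obstacle is purely the bookkeeping in the middle step: one must make sure that for every prime dividing $N_\Sigma$ one genuinely lands in Carayol's case (1) with exponent $1$ — this is exactly where the conditions $\ell\not\equiv\pm1\bmod p$ in the definitions of $\Pi_g$ and $\Omega_g$ are used to eliminate cases (2) and (3) — and that the prescribed Frobenius shapes satisfy the level‑raising identity.
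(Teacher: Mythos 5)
Your proof is correct and takes the same route the paper intends, namely reducing the claim to Theorem \ref{DT theorem} after verifying that $N_gN_\Sigma\in\mathcal{S}(\bar\rho)$; the paper simply states "direct consequence of Theorem \ref{DT theorem}" without spelling out the Carayol check, which you carry out correctly (the primes in $\Pi_g$ and $\Omega_g$ are coprime to $N_gp$, avoid $\pm1\bmod p$ so only Carayol case (1) is available, and the Frobenius shapes $\mtx{\ell}{0}{0}{1}$ and $\mtx{-\ell}{0}{0}{-1}$ both satisfy the trace--determinant identity $\ell(\operatorname{trace})^2=(1+\ell)^2\det$).
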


\begin{proof}
    The result is a direct consequence of Theorem \ref{DT theorem}.
\end{proof}

\begin{lemma}\label{lemma 4.8}
    Let $f\in \cF(\Sigma)$, and let $\ell\neq p$ be a prime which divides $N_g$. Then, $\sigma_\ell(f)=\sigma_\ell(g)$.
\end{lemma}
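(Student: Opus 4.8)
The claim is that raising the level at primes in $\Pi_g \cup \Omega_g$ does not disturb the local Selmer coranks $\sigma_\ell$ at primes $\ell \mid N_g$ with $\ell \neq p$. The plan is to invoke the computation of $\sigma_\ell(\cdot)$ via the formula \eqref{sigma formula}, $\sigma_\ell(\cdot) = s_\ell d_\ell(\cdot)$, and observe that both factors depend only on data that is shared by $f$ and $g$. First I would note that $s_\ell$, the number of primes of $\Q_{\op{cyc}}$ above $\ell$, depends only on $\ell$ and not on the modular form, so $s_\ell$ contributes identically to $\sigma_\ell(f)$ and $\sigma_\ell(g)$; it suffices to show $d_\ell(f) = d_\ell(g)$, i.e., that $\ell^{-1}$ occurs with the same multiplicity as a root of $\widetilde{P}_\ell(f;X)$ and of $\widetilde{P}_\ell(g;X)$.

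The key point is that $\widetilde P_\ell(f;X)$ is the mod-$\varpi$ reduction of $\det(\op{Id} - \op{fr}_\ell X \mid (V_f)_{\op{I}_\ell})$, and this reduction can be read off from the residual representation's behavior on the decomposition group at $\ell$. Concretely, I would argue that $\widetilde P_\ell(f;X)$ equals $\det(\op{Id} - \op{fr}_\ell X \mid (\bar\rho_f \text{-module})_{\op{I}_\ell})$, perhaps with care about whether "take $\op{I}_\ell$-coinvariants" commutes with mod-$\varpi$ reduction. Since $\bar\rho_f \simeq \bar\rho_g$ by the defining property of $\cF(\Sigma)$, the restrictions $\bar\rho_{f|\ell}$ and $\bar\rho_{g|\ell}$ are isomorphic as $\op{G}_\ell$-representations, hence their $\op{I}_\ell$-coinvariants carry isomorphic $\op{fr}_\ell$-actions, and the characteristic polynomials agree. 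Therefore $d_\ell(f) = d_\ell(g)$, and multiplying by the common factor $s_\ell$ gives $\sigma_\ell(f) = \sigma_\ell(g)$.

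The one genuine subtlety — and the step I expect to require the most care — is the interaction between passing to $\op{I}_\ell$-coinvariants and reducing mod $\varpi$: in general $(V_f)_{\op{I}_\ell} \otimes_\cO \kappa$ need not equal $(A_f[\varpi])_{\op{I}_\ell}$ or $(\bar\rho_f)_{\op{I}_\ell}$ on the nose, because coinvariants are right-exact but not left-exact. However, this is precisely the content already invoked implicitly in the paper's use of \cite[Proposition 2.4]{greenberg2000iwasawa} and in the proof of Lemma \ref{lemma 4.5}: the quantity $d_\ell$ is computed from $\widetilde P_\ell$, which by definition is the reduction of the integral polynomial $P_\ell$, and $P_\ell(f;X)$ depends only on $\bar\rho_{f}$ in the cases at hand because $\op{fr}_\ell$ acts on $(V_f)_{\op{I}_\ell}$ through a matrix congruent to the corresponding matrix for $g$ modulo $\varpi$ — indeed both $f$ and $g$ have the same residual representation, and the characteristic polynomial of a matrix only depends on the matrix mod $\varpi$ after reduction. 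I would make this precise by noting that for $\ell \mid N_g$, the inertia invariants structure is governed by $\bar\rho_g$ (via the optimal-level hypothesis, the ramification at $\ell$ is "as small as possible" and is visible residually), so that $\widetilde P_\ell(f;X) = \widetilde P_\ell(g;X)$ directly; hence $d_\ell(f) = d_\ell(g)$ and the lemma follows from \eqref{sigma formula}.
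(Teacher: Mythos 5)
Your overall strategy matches the paper's exactly: reduce via the formula $\sigma_\ell = s_\ell d_\ell$ to the equality $d_\ell(f) = d_\ell(g)$, and try to show that $\widetilde P_\ell(h;X)$ depends only on the residual representation restricted to $\op{G}_\ell$. You also correctly isolate \emph{the} genuine issue: in general, passing to $\op{I}_\ell$-coinvariants does not commute with reduction modulo $\varpi$, because $(T_h)_{\op{I}_\ell}$ can have $\varpi$-torsion, and that torsion survives in $A_h[\varpi]_{\op{I}_\ell}$ but not in $V_h' = (T_h)_{\op{I}_\ell}\otimes K$.

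However, you flag this as "the step I expect to require the most care" and then do not actually resolve it — your resolution ("$\op{fr}_\ell$ acts on $(V_f)_{\op{I}_\ell}$ through a matrix congruent to the corresponding matrix for $g$ modulo $\varpi$", "the characteristic polynomial of a matrix only depends on the matrix mod $\varpi$ after reduction") is essentially a restatement of the thing that needs to be proved, since there is no canonical "matrix mod $\varpi$" on $V_h'$ until one knows that a suitable lattice in $V_h'$ reduces to $A_h[\varpi]_{\op{I}_\ell}$. The paper's actual argument closes exactly this gap: since $\op{ord}_\ell(N_f)=\op{ord}_\ell(N_g)$ and $N_g$ is the prime-to-$p$ Artin conductor of $\bar\rho$, one has $\dim_\kappa A_h[\varpi]_{\op{I}_\ell}=\dim_K (V_h)_{\op{I}_\ell}$ for $h\in\{f,g\}$ (this is the content of the cited Lemma 4.1.2 of Emerton--Pollack--Weston). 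That dimension equality forces $(T_h)_{\op{I}_\ell}$ to be $\cO$-torsion-free, hence an $\cO$-lattice in $V_h'$, and only then does one get $\widetilde P_\ell(h;X)=\det(\op{Id}-\op{fr}_\ell X\mid A_h[\varpi]_{\op{I}_\ell})$, which is manifestly an invariant of $\bar\rho_{h|\ell}\simeq\bar\rho_{g|\ell}$. So your proposal has the right architecture and correctly locates the load-bearing hypothesis (optimality of the level at $\ell$), but the central step is asserted rather than proved; to complete it you need the dimension count and the consequent torsion-freeness of $(T_h)_{\op{I}_\ell}$.
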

\begin{proof}
    It suffices for us to show that $d_\ell(f)=d_\ell(g)$. Let $h\in\{g,f\}$, recall that $V_h':=(V_h)_{\op{I}_\ell}$. Note that as modules over the inertia group $\op{I}_\ell$, $V_h$ and $A_h[\varpi]$ are self dual. Since $\op{ord}_\ell(N_f)=\op{ord}_\ell(N_g)$, and $N_g$ is the prime to $p$ part of the Artin-conductor of $\bar{\rho}$, it follows that \begin{equation}\label{dim equality}\op{dim} A_h[\varpi]_{\op{I}_\ell}=\op{dim} (V_h)_{\op{I}_\ell},\end{equation}(cf. \cite[proof of Lemma 4.1.2]{emerton2006variation}). Recall that $P_\ell(h;X):=\op{det}\left(\op{Id}-\op{fr}_\ell X\mid V_h'\right)$ and that $\widetilde{P}_\ell(h;X)$ is the mod-$\varpi$ reduction of $P_\ell(h;X)$. Set $T_h':=(T_h)_{\op{I}_\ell}$. We identify $A_h[\varpi]$ with $T_h/\varpi T_h$, and thus $A_h[\varpi]_{\op{I}_\ell}$ with $T_h'/\varpi T_h'$. The equality \eqref{dim equality} implies that $T_h'$ is torsion free. We identify $T_h'\otimes_{\cO} K$ with $V_h'$, and since $T_h'$ is torsion free, we find that $T_h'$ is an $\cO$-lattice in $V_h'$, and that $P_\ell(h;X)=\op{det}\left(\op{Id}-\op{fr}_\ell X\mid T_h'\right)$. Therefore, the mod-$\varpi$ reduction of $P_\ell(h;X)$ is given by \[\begin{split}\widetilde{P}_\ell(h;X) =& \op{det}\left(\op{Id}-\op{fr}_\ell X\mid \left(T_h'/\varpi T_h'\right)\right),\\
    =& \op{det}\left(\op{Id}-\op{fr}_\ell X\mid A_h[\varpi]_{\op{I}_\ell}\right),\\ \end{split}\]
Since $A_g[\varpi]\simeq A_f[\varpi]$ as $\op{G}_\ell$-modules, we find that $A_g[\varpi]_{\op{I}_\ell}\simeq A_f[\varpi]_{\op{I}_\ell}$ as $\op{G}_\ell/\op{I}_\ell$-modules. It thus follows that $\widetilde{P}_\ell(f;X)=\widetilde{P}_\ell(g;X)$, from which we deduce that $d_\ell(f)=d_\ell(g)$. 
\end{proof}

\begin{theorem}\label{thm 4.9}
 Let $g$ be a Hecke newform of optimal level $N_g$, trivial nebentype and weight $2$. Assume that the following conditions are satisfied
\begin{enumerate} 
\item $\bar{\rho}_g$ is absolutely irreducible,
\item $g$ is $p$-ordinary and $p\nmid N_g$,
\item $g$ has optimal level,
\item $\mu_p(g)=0$.
\end{enumerate}
   Let $\Sigma\in \mathfrak{T}(n, r)$ and $f\in \cF(\Sigma)$. Then, the following assertions hold
   \begin{enumerate}
       \item $f$ has good ordinary reduction at $p$,
       \item $\mu_p(f)=0$,
       \item $\lambda_p(f)=\lambda_p(g)+n$.
   \end{enumerate}
\end{theorem}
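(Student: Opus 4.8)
The plan is to combine the Greenberg--Vatsal comparison formula (Theorem \ref{GV thm}) with the local computations in Lemmas \ref{lemma 4.5} and \ref{lemma 4.8}, together with the level-raising input (Theorem \ref{DT theorem}, already invoked to guarantee $\cF(\Sigma)\neq\emptyset$) and the fact that ordinariness at $p$ is preserved under the congruence.

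\begin{proof}
Let $f\in\cF(\Sigma)$ with $\Sigma\in\mathfrak{T}(n,r)$, so that $\bar\rho_f\simeq\bar\rho_g$ and $N_f=N_g N_\Sigma=N_g\prod_{i=1}^n q_i\prod_{j=1}^r\ell_j$ with $q_i\in\Pi_g$ and $\ell_j\in\Omega_g$. First, $f$ has good ordinary reduction at $p$: since $p\nmid N_g$ and, by hypothesis, $p\nmid q_i,\ell_j$, we have $p\nmid N_f$, so $f$ has good reduction at $p$; and $p$-ordinariness is inherited from $g$ via the congruence $\bar\rho_f\simeq\bar\rho_g$ (this is \cite[Lemma 3.3]{ray2023constructing}, as already noted in the discussion preceding Theorem \ref{GV thm}). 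Moreover, exactly as in that discussion, $\det\bar\rho_f=\bar\chi$ forces the nebentype of $f$ to be trivial, so $f$ is a weight $2$ newform on $\Gamma_0(N_f)$ and the Greenberg--Vatsal machinery applies.

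Next, since $\mu_p(g)=0$, Theorem \ref{GV thm} gives $\mu_p(f)=0$ and
\[
\lambda_p(f)-\lambda_p(g)=\sum_{\ell\mid N_f}\bigl(\sigma_\ell(g)-\sigma_\ell(f)\bigr).
\]
I would now evaluate this sum by partitioning the primes dividing $N_f$ into three groups. For $\ell\mid N_g$ with $\ell\neq p$, Lemma \ref{lemma 4.8} gives $\sigma_\ell(f)=\sigma_\ell(g)$, so these terms contribute $0$. (There is nothing to check at $p$ itself since $p\nmid N_f$.) For $\ell=q_i\in\Pi_g$, part \eqref{lemma 4.5 p1} of Lemma \ref{lemma 4.5} gives $\sigma_{q_i}(g)=1$ and $\sigma_{q_i}(f)=0$, contributing $+1$ each, for a total of $n$. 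For $\ell=\ell_j\in\Omega_g$, part \eqref{lemma 4.5 p2} of Lemma \ref{lemma 4.5} gives $\sigma_{\ell_j}(g)=\sigma_{\ell_j}(f)=0$, contributing $0$ each. One subtlety to address: Lemma \ref{lemma 4.5} is stated for primes $\ell\nmid N_gp$ that divide $N_f$; I should note that by Carayol's theorem (Theorem \ref{carayol thm}) the ramification of $\rho_f$ at such $\ell$ is genuinely nontrivial when $\ell\mid N_f$ (indeed $\ell\| N_f$ for $\ell\in\Pi_g$, since $\ell\not\equiv\pm1\bmod p$ forces case (1) of that theorem), so Lemma \ref{lemma 4.4}'s hypothesis "$\rho_f$ ramified at $\ell$" is satisfied — alternatively, if $\rho_f$ were unramified at $\ell\mid N_f$ one could still run the argument, but checking the $\sigma$-values in the ramified case is precisely what Lemma \ref{lemma 4.5} does. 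Summing the three contributions yields $\lambda_p(f)-\lambda_p(g)=n$, i.e. $\lambda_p(f)=\lambda_p(g)+n$.

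The only real work is bookkeeping the three types of primes correctly and making sure the hypotheses of Lemmas \ref{lemma 4.4}, \ref{lemma 4.5}, \ref{lemma 4.8} are met for each; there is no new analytic or algebraic input beyond what has already been established. I expect the one place requiring care is confirming that for $\ell\in\Pi_g$ dividing $N_f$ the representation $\rho_f$ is ramified at $\ell$ with $\op{ord}_\ell N_f=1$, which pins down $V_f'$ as the unramified line and gives $\widetilde P_\ell(f;X)=1-X$ as used in Lemma \ref{lemma 4.5}; this follows from Carayol's classification together with the congruence conditions defining $\Pi_g$ and $\Omega_g$.
\end{proof}
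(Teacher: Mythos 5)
Your proof is correct and takes essentially the same route as the paper: invoke \cite[Lemma 3.3]{ray2023constructing} for ordinariness, apply Theorem \ref{GV thm} to get $\mu_p(f)=0$ and the difference formula, then split the sum over $\ell\mid N_f$ into $\ell\mid N_g$ (Lemma \ref{lemma 4.8}, contribution $0$), $\ell\in\Pi_g$ (Lemma \ref{lemma 4.5}\eqref{lemma 4.5 p1}, contribution $n$), and $\ell\in\Omega_g$ (Lemma \ref{lemma 4.5}\eqref{lemma 4.5 p2}, contribution $0$). The extra remarks you make about the nebentype and about $\rho_f$ being ramified at primes in $\Pi_g\cup\Omega_g$ dividing $N_f$ are sound (the latter is automatic since $N_f$ is the prime-to-$p$ conductor of $\rho_f$, and $\ell\Vert N_f$ is already built into the definition of $\cF(\Sigma)$), though the paper treats these points implicitly via the discussion preceding Theorem \ref{GV thm}.
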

\begin{proof}
    As noted earlier, that $f$ has ordinary reduction at $p$ follows from \cite[Lemma 3.3]{ray2023constructing}. From Theorem \ref{GV thm}, we find that $\mu_p(f)=0$ and 
    \[\lambda_p(f)=\lambda_p(g)+\sum_{i=1}^n \left(\sigma_{q_i}(g)-\sigma_{q_i}(f)\right)+\sum_{j=1}^r \left(\sigma_{\ell_j}(g)-\sigma_{\ell_j}(f)\right)+\sum_{\ell|N_g}\left(\sigma_{\ell}(g)-\sigma_{\ell}(f)\right). \]
    For $\ell|N_g$, it follows from Lemma \ref{lemma 4.8} that 
    \[\sum_{\ell|N_g}\left(\sigma_{\ell}(g)-\sigma_{\ell}(f)\right)=0.\]
    It follows from \eqref{lemma 4.5 p1} of Lemma \ref{lemma 4.5} that 
    \[\left(\sigma_{q_i}(g)-\sigma_{q_i}(f)\right)=1\] and it follows from \eqref{lemma 4.5 p2} of Lemma \ref{lemma 4.5} that 
    \[\left(\sigma_{\ell_j}(g)-\sigma_{\ell_j}(f)\right)=0.\]
    It therefore follows that 
    \[\lambda_p(f)=\lambda_p(g)+n.\]
\end{proof}

\section{Constructing Galois representations with prescribed $\lambda$-invariant}\label{s 5}
\par Throughout this section, $p\geq 5$. We introduce our assumptions. Let $g$ be a normalized newform of weight $2$ on $\Gamma_0(N_g)$. Throughout this section, we assume the following conditions.

\begin{enumerate} 
\item The residue field $\kappa=\cO/\varpi$ is $\F_p$, i.e., $f(\mathfrak{p}/p)=1$ where $\mathfrak{p}$ is the prime above $p$ prescribed by $\iota_p$.  
\item The Galois representation
$\bar{\rho}_g:\op{Gal}(\bar{\Q}/\Q)\rightarrow \op{GL}_2(\F_p)$ is surjective.
\item The modular form $g$ is $p$-ordinary and $p\nmid N_g$,
\item $g$ has optimal level,
\item $\mu_p(g)=0$.
\end{enumerate}

We show that the sets $\Pi_g$ and $\Omega_g$ both have positive density. Furthermore, we estimate these densities. We let $Y$ (resp. $Y'$) be the subset of $\op{GL}_2(\F_p)$ consisting of semisimple matrices conjugate to $\mtx{a}{}{}{1}$ (resp. $\mtx{a}{}{}{-1}$), where $a\neq \pm 1$. It is easy to see that 
\[\# Y =\# Y'= (p-3)\frac{\# \op{GL}_2(\F_p)}{\# \op{T}(\F_p)},\] where $\op{T}$ denotes the diagonal torus. Therefore, we find that 
\begin{equation}\label{card of Y and Y'}
    \frac{\# Y }{\# \op{GL}_2(\F_p)}=\frac{\# Y'}{\# \op{GL}_2(\F_p)}=\frac{(p-3)}{(p-1)^2}.
\end{equation}
Let $\bar{\rho}$ denote the residual representation $\bar{\rho}_g$, and let $\Q(\bar{\rho})$ be the Galois extension of $\Q$ which is the fixed field of the kernel of $\bar{\rho}$. We refer to $\Q(\bar{\rho})$ as the field \emph{cut out by $\bar{\rho}$}. We set $G$ to denote the Galois group $\op{Gal}(\Q(\bar{\rho})/\Q)$. The residual representation induces an isomorphism $\Phi: G\xrightarrow{\sim} \op{GL}_2(\F_p)$. Let $Z$ (resp. $Z'$) denote $\Phi^{-1}(Y)$ (resp. $\Phi^{-1}(Y')$). It follows from \eqref{card of Y and Y'} that 
\begin{equation}\label{card of Z and Z'}
    \frac{\# Z }{\# G}=\frac{\# Z'}{\# G}=\frac{(p-3)}{(p-1)^2}.
\end{equation}

\begin{lemma}\label{lemma 5.1}
    Let $\ell\nmid N_g p$ be a prime. The following assertions hold
    \begin{enumerate}
        \item $\op{fr}_\ell\in Z$ if and only if $\bar{\rho}(\op{fr}_\ell)=\mtx{\ell}{}{}{1}$ up to conjugation and $\ell\not\equiv \pm 1\mod{p}$;
         \item $\op{fr}_\ell\in Z'$ if and only if $\bar{\rho}(\op{fr}_\ell)=\mtx{-\ell}{}{}{-1}$ up to conjugation and $\ell\not\equiv \pm 1\mod{p}$.
    \end{enumerate}
\end{lemma}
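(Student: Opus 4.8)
The plan is to deduce both equivalences from a single arithmetic fact: since $g$ has weight $2$ and trivial nebentype, $\det\bar{\rho}_g=\bar{\chi}$ is the mod-$p$ cyclotomic character, as recorded in Section~\ref{s 4}; consequently, for $\ell\nmid N_gp$ one has $\det\bar{\rho}(\op{fr}_\ell)=\bar{\chi}(\op{fr}_\ell)\equiv\ell\pmod p$. First I would note that because $\ell\nmid N_gp$ the representation $\bar{\rho}$ is unramified at $\ell$, so $\bar{\rho}(\op{fr}_\ell)$ is a well-defined conjugacy class in $\op{GL}_2(\F_p)$, and since $Y$ and $Y'$ (hence $Z=\Phi^{-1}(Y)$ and $Z'=\Phi^{-1}(Y')$) are conjugation-invariant, the memberships $\op{fr}_\ell\in Z$ and $\op{fr}_\ell\in Z'$ are meaningful statements about this class.

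For part (1): by definition, $\op{fr}_\ell\in Z$ exactly when $\bar{\rho}(\op{fr}_\ell)$ is conjugate to $\mtx{a}{}{}{1}$ for some $a\in\F_p^\times$ with $a\neq\pm1$. Comparing determinants gives $a=a\cdot 1=\ell$ in $\F_p$, so $\bar{\rho}(\op{fr}_\ell)$ is conjugate to $\mtx{\ell}{}{}{1}$ and the constraint $a\neq\pm1$ becomes $\ell\not\equiv\pm1\pmod p$. Conversely, if $\bar{\rho}(\op{fr}_\ell)$ is conjugate to $\mtx{\ell}{}{}{1}$ with $\ell\not\equiv\pm1$, then $\ell\bmod p$ and $1$ are distinct eigenvalues, the matrix is semisimple, and taking $a:=\ell\bmod p\neq\pm1$ places it in $Y$, i.e.\ $\op{fr}_\ell\in Z$.

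Part (2) runs identically with $Y'$ and $\mtx{a}{}{}{-1}$ in place of $Y$ and $\mtx{a}{}{}{1}$: here the determinant comparison reads $-a\equiv\ell$, so $a\equiv-\ell$, the class of $\bar{\rho}(\op{fr}_\ell)$ is that of $\mtx{-\ell}{}{}{-1}$, and $a\neq\pm1$ is again equivalent to $\ell\not\equiv\pm1\pmod p$ (since $-\ell\equiv\pm1$ iff $\ell\equiv\mp1$); the converse is the same.

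I do not expect a genuine obstacle here — the argument is elementary linear algebra over $\F_p$ once $\det\bar{\rho}=\bar{\chi}$ is invoked. The only points requiring care are the sign bookkeeping in the determinant comparison (the ``moving'' eigenvalue is $\ell$ in case (1) but $-\ell$ in case (2)), verifying that in both cases the single congruence $\ell\not\equiv\pm1\pmod p$ is exactly what encodes $a\neq\pm1$, and recalling that a $2\times2$ matrix with distinct eigenvalues is automatically semisimple, so the semisimplicity requirement in the definitions of $Y$ and $Y'$ costs nothing once the eigenvalues are pinned down.
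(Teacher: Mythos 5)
Your proof is correct and follows the same route as the paper: invoke $\det\bar{\rho}_g=\bar{\chi}$ so that $\det\bar{\rho}(\op{fr}_\ell)=\ell$, then compare determinants to pin down the free eigenvalue $a$ and translate $a\neq\pm1$ into $\ell\not\equiv\pm1\pmod{p}$. The paper's own proof is terser and only spells out the forward direction; your spelling out of the converse and the semisimplicity remark are harmless elaborations of the same argument.
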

\begin{proof}
    Suppose that $\op{fr}_\ell\in Z$ (resp. $\op{fr}_\ell\in Z'$). Then, since the weight of $g$ is $2$ and has trivial nebentype, we find that $\op{det}\bar{\rho}(\op{fr}_\ell)=\ell$. It follows that $\bar{\rho}(\op{fr}_\ell)=\mtx{\ell}{}{}{1}$ (resp. $\bar{\rho}(\op{fr}_\ell)=\mtx{-\ell}{}{}{-1}$) and $\ell\not\equiv \pm 1\mod{p}$.
\end{proof}

\begin{proposition}\label{density of Omega_g}
    Let $\ell\nmid N_g p$ be a prime. Then, $\op{fr}_\ell\in Z'$ if and only if $\ell\in \Omega_g$. As a consequence, it follows that $\Omega_g$ has positive density equal to $\frac{(p-3)}{(p-1)^2}$.
\end{proposition}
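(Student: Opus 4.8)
The plan is to chase definitions: unpack what $\ell \in \Omega_g$ means and match it against the characterization of $Z'$ furnished by Lemma \ref{lemma 5.1}, then invoke the Chebotarev density theorem together with the count \eqref{card of Z and Z'}. First I would note that $\Omega_g$ is defined by three conditions: $\ell \nmid N_g p$, $\ell \not\equiv \pm 1 \bmod p$, and $\bar{\rho}_g(\op{fr}_\ell) = \mtx{-\ell}{}{}{-1}$ up to conjugation. The hypothesis $\ell \nmid N_g p$ is already in place, so it remains to compare the last two conditions with membership in $Z'$.

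By Lemma \ref{lemma 5.1}(2), $\op{fr}_\ell \in Z'$ if and only if $\bar{\rho}(\op{fr}_\ell) = \mtx{-\ell}{}{}{-1}$ up to conjugation and $\ell \not\equiv \pm 1 \bmod p$. This is literally the conjunction of the two remaining defining conditions of $\Omega_g$, so $\op{fr}_\ell \in Z'$ if and only if $\ell \in \Omega_g$; the first claim follows immediately. (One small point to record: the ``$\op{fr}_\ell$'' appearing in Lemma \ref{lemma 5.1} refers to the conjugacy class of Frobenius in $G = \op{Gal}(\Q(\bar{\rho})/\Q)$, and since $\ell \nmid N_g p$ this is unramified and well-defined as a conjugacy class, which is all that is needed because $Z'$ is conjugation-stable.)

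For the density statement, observe that $Z'$ is a union of conjugacy classes in the finite group $G$, and the condition $\ell \in \Omega_g$ is equivalent to $\op{fr}_\ell$ lying in $Z'$. By the Chebotarev density theorem applied to the finite Galois extension $\Q(\bar{\rho})/\Q$, the set of primes $\ell$ (unramified in $\Q(\bar{\rho})$, in particular $\ell \nmid N_g p$, since ramified primes form a finite set of density zero) with $\op{fr}_\ell \in Z'$ has Dirichlet density $\frac{\# Z'}{\# G}$. By \eqref{card of Z and Z'} this equals $\frac{(p-3)}{(p-1)^2}$, which is positive since $p \geq 5$. This completes the argument.

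The proof here is essentially bookkeeping; there is no real obstacle, since the genuine content — the group-theoretic identification of $Z'$ via the determinant constraint $\det \bar{\rho}(\op{fr}_\ell) = \ell$ and the cardinality count \eqref{card of Y and Y'} — has already been carried out in Lemma \ref{lemma 5.1} and equation \eqref{card of Z and Z'}. The only point requiring a modicum of care is making sure that the density-zero set of ramified primes is harmlessly discarded and that the Frobenius conjugacy class is the right object to feed into Chebotarev.
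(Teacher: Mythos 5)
Your proof is correct and follows the same route as the paper: invoke Lemma \ref{lemma 5.1}(2) to identify $\op{fr}_\ell \in Z'$ with the defining conditions of $\Omega_g$, then apply Chebotarev together with the count \eqref{card of Z and Z'}. The extra remarks about $Z'$ being conjugation-stable and the ramified primes having density zero are sensible bookkeeping that the paper leaves implicit.
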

\begin{proof}
    We find that $\op{fr}_\ell\in Z'$ if and only if $\bar{\rho}(\op{fr}_\ell)=\mtx{-\ell}{}{}{-1}$ up to conjugation and $\ell\not\equiv \pm 1\mod{p}$. In other words, $\op{fr}_\ell\in Z'$ if and only if $\ell\in \Omega_g$. By the Chebotarev density theorem, $\Omega_g$ has positive density equal to $\frac{\# Z'}{\# G}=\frac{(p-3)}{(p-1)^2}$.
\end{proof}

Recall that $\Q_1$ is the $\Z/p\Z$-extension of $\Q$ contained in $\Q_{\op{cyc}}$. Let $\cL/\Q$ be the compositum $\Q(\bar{\rho})\cdot \Q_1$. 
\begin{lemma}
    The extensions $\Q(\bar{\rho})$ and $\Q_1$ are linearly disjoint. 
\end{lemma}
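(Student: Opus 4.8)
The plan is to show $\op{Gal}(\cL/\Q) \cong \op{Gal}(\Q(\bar\rho)/\Q) \times \op{Gal}(\Q_1/\Q)$ by verifying that $\Q(\bar\rho) \cap \Q_1 = \Q$. Since $\Q_1/\Q$ is the unique subextension of $\Q_{\op{cyc}}/\Q$ of degree $p$, and it has no proper nontrivial subextensions (as $\op{Gal}(\Q_1/\Q) \cong \Z/p\Z$ is simple), the intersection $\Q(\bar\rho) \cap \Q_1$ is either $\Q$ or all of $\Q_1$. So it suffices to rule out the possibility that $\Q_1 \subseteq \Q(\bar\rho)$.

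Suppose for contradiction that $\Q_1 \subseteq \Q(\bar\rho)$. Then $\op{Gal}(\Q(\bar\rho)/\Q_1)$ is an index-$p$ subgroup of $G = \op{Gal}(\Q(\bar\rho)/\Q)$, which is normal since $\op{Gal}(\Q_1/\Q)$ is abelian; hence under the isomorphism $\Phi: G \xrightarrow{\sim} \op{GL}_2(\F_p)$ it corresponds to a normal subgroup of $\op{GL}_2(\F_p)$ of index $p$. The key step is to observe that $\op{GL}_2(\F_p)$ has no normal subgroup of index $p$: for $p \geq 5$ (indeed $p \geq 4$), the group $\op{SL}_2(\F_p)$ is perfect, so any normal subgroup of index $p$ would have to contain $\op{SL}_2(\F_p)$, but then it would have index dividing $|\op{GL}_2(\F_p)/\op{SL}_2(\F_p)| = p-1$, which is coprime to $p$ — a contradiction. (Equivalently, the abelianization of $\op{GL}_2(\F_p)$ is $\F_p^\times$ via the determinant, which has order $p-1$, so there is no surjection onto $\Z/p\Z$.) This forces $\Q_1 \not\subseteq \Q(\bar\rho)$, hence $\Q(\bar\rho) \cap \Q_1 = \Q$, and the two fields are linearly disjoint.

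The main (and only real) obstacle is the group-theoretic input that $\op{GL}_2(\F_p)$ admits no quotient of order $p$; once that is in hand the argument is immediate from Galois theory and the simplicity of $\op{Gal}(\Q_1/\Q)$. Since the paper assumes $\bar\rho_g$ is surjective onto $\op{GL}_2(\F_p)$ and $p \geq 5$, all hypotheses needed for this are available.
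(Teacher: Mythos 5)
Your proof is correct and takes essentially the same route as the paper: reduce linear disjointness to the group-theoretic claim that $\op{GL}_2(\F_p)$ has no normal subgroup of index $p$. The paper justifies that claim by appealing to simplicity of $\op{PSL}_2(\F_p)$, whereas you argue directly via perfectness of $\op{SL}_2(\F_p)$ (equivalently, that $\det$ induces an isomorphism of the abelianization with $\F_p^\times$, whose order $p-1$ is coprime to $p$), which is a slightly more self-contained way to reach the identical conclusion.
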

\begin{proof}
    Let $E:=\Q(\bar{\rho})\cap \Q_1$ and $N:=\op{Gal}(\Q(\bar{\rho})/E)$. Note that $N$ is a normal subgroup of $G\simeq \op{GL}_2(\F_p)$ and that $[G:N]$ divides $p$. For $p\geq 5$, the group $\op{PSL}_2(\F_p)$ is simple. It is easy to see that $\op{GL}_2(\F_p)$ does not contain an index $p$ normal subgroup. Therefore, $N=G$ and thus, $\Q(\bar{\rho})$ and $\Q_1$ are linearly disjoint.
\end{proof}

Let $\Gamma_1:=\op{Gal}(\Q_1/\Q)$ and we find that $\op{Gal}(\cL/\Q)\simeq G\times \Gamma_1$. Let $W$ be the product $Z\times (\Gamma_1\backslash \{0\})$. 

\begin{proposition}\label{density of Pi_g}
    Let $\ell\nmid N_gp$ be a prime. Then, $\op{fr}_\ell\in W$ if and only if $\ell\in \Pi_g$. As a result, $\Pi_g$ has density equal to $\frac{(p-3)}{p(p-1)}$.
\end{proposition}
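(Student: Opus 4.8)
The plan is to translate the three defining conditions on $\Pi_g$ into a single Frobenius condition inside $\op{Gal}(\cL/\Q)\simeq G\times\Gamma_1$ and then apply the Chebotarev density theorem. First I would record that for $\ell\nmid N_gp$ the prime $\ell$ is unramified in $\cL$ (the field $\Q(\bar{\rho})$ is unramified outside the primes dividing $N_gp$, and $\Q_1/\Q$ is ramified only at $p$), so $\op{fr}_\ell$ defines a conjugacy class in $\op{Gal}(\cL/\Q)$; since $Z$ is conjugation-stable in $G$ and $\Gamma_1$ is abelian, $W=Z\times(\Gamma_1\setminus\{0\})$ is a union of conjugacy classes, so the assertion ``$\op{fr}_\ell\in W$'' is meaningful. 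Under the isomorphism $\op{Gal}(\cL/\Q)\xrightarrow{\sim}G\times\Gamma_1$ given by restriction to $\Q(\bar{\rho})$ and to $\Q_1$ (which is an isomorphism precisely because these two fields are linearly disjoint, as established above), the class $\op{fr}_\ell$ corresponds to the pair $\bigl(\op{fr}_\ell|_{\Q(\bar{\rho})},\op{fr}_\ell|_{\Q_1}\bigr)$, so $\op{fr}_\ell\in W$ if and only if $\op{fr}_\ell|_{\Q(\bar{\rho})}\in Z$ and $\op{fr}_\ell|_{\Q_1}\neq 0$.

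Next I would match these two conditions with the definition of $\Pi_g$. By Lemma \ref{lemma 5.1}, the condition $\op{fr}_\ell|_{\Q(\bar{\rho})}\in Z$ is equivalent to $\bar{\rho}_g(\op{fr}_\ell)=\mtx{\ell}{}{}{1}$ up to conjugation together with $\ell\not\equiv\pm1\bmod p$ --- that is, condition (3) and the first half of condition (2) in the definition of $\Pi_g$. For the nontriviality of $\op{fr}_\ell|_{\Q_1}$, I would use that $\Q_1$ is the degree-$p$ subextension of $\Q(\mu_{p^2})/\Q$, namely the fixed field of the unique subgroup of order $p-1$ in $\op{Gal}(\Q(\mu_{p^2})/\Q)\simeq(\Z/p^2\Z)^\times$ (a cyclic group of order $p(p-1)$, whose subgroup of order $p-1$ is exactly $\{x:x^{p-1}=1\}$). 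Since the Frobenius of $\ell$ in $(\Z/p^2\Z)^\times$ is the class $[\ell]$, one gets $\op{fr}_\ell|_{\Q_1}=0$ iff $[\ell]$ lies in that subgroup, i.e. iff $\ell^{p-1}\equiv 1\bmod p^2$; hence $\op{fr}_\ell|_{\Q_1}\neq 0$ is precisely the remaining half of condition (2). Together with the standing hypothesis $\ell\nmid N_gp$, this shows $\op{fr}_\ell\in W\iff\ell\in\Pi_g$.

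The density assertion then follows by applying Chebotarev to $\cL/\Q$: the set $\{\ell:\op{fr}_\ell\in W\}$ has density
\[
\frac{\#W}{\#\op{Gal}(\cL/\Q)}=\frac{\#Z\cdot(\#\Gamma_1-1)}{\#G\cdot\#\Gamma_1}=\frac{p-3}{(p-1)^2}\cdot\frac{p-1}{p}=\frac{p-3}{p(p-1)},
\]
using \eqref{card of Z and Z'} and $\#\Gamma_1=p$. I expect the only point that is not pure bookkeeping is the identification of the cyclotomic congruence $\ell^{p-1}\not\equiv 1\bmod p^2$ with the nontriviality of $\op{fr}_\ell$ in $\Gamma_1$; the product decomposition of $\op{Gal}(\cL/\Q)$ and the Chebotarev count are then routine.
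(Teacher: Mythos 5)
Your proof is correct and follows essentially the same route as the paper's: invoke the linear-disjointness decomposition $\op{Gal}(\cL/\Q)\simeq G\times\Gamma_1$, match the $\Pi_g$ conditions against $Z\times(\Gamma_1\setminus\{0\})$ via Lemma \ref{lemma 5.1} and the observation that $\ell$ is nonsplit in $\Q_1$ iff $\ell^{p-1}\not\equiv 1\bmod p^2$, then apply Chebotarev. You've merely spelled out the cyclotomic step (identifying $\Q_1$ as the fixed field of the order-$(p-1)$ subgroup of $(\Z/p^2\Z)^\times$) that the paper states without elaboration.
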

\begin{proof}
    Note that the prime $\ell$ is nonsplit in $\Q_1$ if and only if $\ell^{p-1}\not \equiv 1\mod{p^2}$. The result is therefore a direct consequence of Lemma \ref{lemma 5.1}. By the Chebotarev density theorem, $\Pi_g$ has density equal to
    \[\frac{\#Z\times (\#\Gamma_1-1)}{\#G \times \#\Gamma_1}=\frac{(p-3)}{p(p-1)}.\]
\end{proof}

\begin{proof}[Proof of Theorem \ref{main thm 1}]
    The Theorem is a direct consequence of Proposition \ref{density of Omega_g}, Proposition \ref{density of Pi_g} and Theorem \ref{thm 4.9}.
\end{proof}

\begin{theorem}\label{last thm}
     Let $p\geq 5$ be a prime and let $g$ be a normalized newform of weight $2$ on $\Gamma_0(N_g)$ satisfying all of the conditions of Theorem \ref{main thm 1}. Furthermore, assume that $\lambda_p(g)\leq 1$. Then, for any set of primes $\{\ell_1, \dots, \ell_r\}\subset \Omega_g$, there is a Hecke newform of weight $2$ of level $N_f=N_g\ell_1\dots \ell_r$ such that 
     \begin{enumerate}
    \item $f$ has good ordinary reduction at $p$, 
    \item $\bar{\rho}_g\simeq \bar{\rho}_f$, 
    \item $\mu_p(f)=0$, 
    \item $\op{rank}^{\op{BK}}(f)=\lambda_p(g)$.
\end{enumerate}
\end{theorem}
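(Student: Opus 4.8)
The plan is to read this off as the $n=0$ special case of Theorem~\ref{thm 4.9}, then upgrade the conclusion on $\lambda_p(f)$ to a statement about $\op{rank}^{\op{BK}}(f)$ using Proposition~\ref{prop 3.2}(3). First I would set $\Sigma:=\{\ell_1,\dots,\ell_r\}$ and regard it as an element of $\mathfrak{T}(0,r)$, so that $N_\Sigma=\ell_1\cdots\ell_r$ and $N_gN_\Sigma=N_f$. When $r\geq 1$ the set $\cF(\Sigma)$ is nonempty by the level-raising consequence of Theorem~\ref{DT theorem} recorded in Section~\ref{s 4}, so I may fix $f\in\cF(\Sigma)$; by definition such an $f$ is a weight-$2$ newform with $N_f=N_g\ell_1\cdots\ell_r$ and $\bar{\rho}_f\simeq\bar{\rho}_g$, which is conclusion (2), and the discussion preceding Theorem~\ref{GV thm} shows that $f$ has trivial nebentype, hence lies on $\Gamma_0(N_f)$. (If $r=0$ one simply takes $f=g$, which already satisfies all four conclusions by the argument below.)

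Next I would apply Theorem~\ref{thm 4.9} to this $\Sigma$ and $f$. Its hypotheses are contained in the hypotheses of Theorem~\ref{main thm 1} that we are assuming on $g$: absolute irreducibility of $\bar{\rho}_g$ follows from its surjectivity, $g$ is $p$-ordinary of optimal level with $p\nmid N_g$, and $\mu_p(g)=0$. Hence Theorem~\ref{thm 4.9} gives at once that $f$ has good ordinary reduction at $p$ (conclusion (1)), that $\mu_p(f)=0$ (conclusion (3)), and that
\[
\lambda_p(f)=\lambda_p(g)+\sum_{j=1}^r\bigl(\sigma_{\ell_j}(g)-\sigma_{\ell_j}(f)\bigr)+\sum_{\ell\mid N_g}\bigl(\sigma_\ell(g)-\sigma_\ell(f)\bigr)=\lambda_p(g),
\]
where the first sum vanishes by Lemma~\ref{lemma 4.5}(2) — this is the only place the hypothesis $\{\ell_1,\dots,\ell_r\}\subset\Omega_g$ enters — and the second vanishes by Lemma~\ref{lemma 4.8}. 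In particular $\lambda_p(f)=\lambda_p(g)\leq 1$.

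Finally, since $\mu_p(f)=0$ and $\lambda_p(f)\leq 1$, Proposition~\ref{prop 3.2}(3) applies and yields $\op{rank}^{\op{BK}}(f)=\lambda_p(f)=\lambda_p(g)$, which is conclusion (4); concretely, if $\lambda_p(g)=0$ then parts (1)--(2) of that proposition force the rank to be $0$, and if $\lambda_p(g)=1$ then the rank is $\leq 1$ and $\equiv 1\pmod 2$, hence $=1$. I do not expect a genuine obstacle: the result is essentially Theorem~\ref{thm 4.9} with $n=0$ combined with the bound $\op{rank}^{\op{BK}}\leq\lambda_p$ and the parity relation. The one point worth flagging is that the equality $\op{rank}^{\op{BK}}(f)=\lambda_p(f)$ is only available once one knows $\mu_p(f)=0$, and this $\mu$-vanishing is precisely the Greenberg--Vatsal input built into Theorem~\ref{thm 4.9}; without it the Bloch--Kato corank over $\Q$ could be strictly smaller than $\lambda_p(f)$.
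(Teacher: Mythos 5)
Your proof is correct and is essentially the same argument as the paper's: the paper invokes Theorem~\ref{main thm 1} with $n=0$ to obtain a congruent $f$ of the stated level with $\mu_p(f)=0$ and $\lambda_p(f)=\lambda_p(g)$, then cites Proposition~\ref{prop 3.2} to convert this into $\op{rank}^{\op{BK}}(f)=\lambda_p(g)$. You simply unwind Theorem~\ref{main thm 1} to its source, Theorem~\ref{thm 4.9}, applied to $\Sigma\in\mathfrak{T}(0,r)$, which is a slightly cleaner route (it avoids the restriction ``$n\geq 1$'' appearing in the statement of Theorem~\ref{main thm 1}) and your explicit treatment of the degenerate case $r=0$ via $f=g$ is a welcome detail the paper leaves implicit.
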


\begin{proof}
    It follows from Theorem \ref{main thm 1} that there exists $f$ satisfying the first three assertions, and such that $\lambda_p(f)=\lambda_p(g)$. Proposition \ref{prop 3.2} implies that 
    \[\op{rank}^{\op{BK}}(f)=\lambda_p(f)=\lambda_p(g).\] This proves the last assertion.
\end{proof}

\begin{proof}[Proof of Theorem \ref{main thm 2}]
    It suffices to show that there is a non-CM Hecke newform $g$ of weight $2$ on $\Gamma_0(N_g)$ and a density $1$ set of primes $\Sigma$ such that for all $p\in \Sigma$,
    \begin{enumerate}
        \item $p\geq 5$, 
        \item The residue field $\kappa=\cO/\varpi$ is $\F_p$, i.e., $f(\mathfrak{p}/p)=1$. 
\item The Galois representation
$\bar{\rho}_g:\op{Gal}(\bar{\Q}/\Q)\rightarrow \op{GL}_2(\F_p)$ is surjective.
\item The modular form $g$ is $p$-ordinary and $p\nmid N_g$,
\item $g$ has optimal level, i.e., $N_g$ is the prime to $p$ part of the Artin conductor of the residual representation,
\item $\mu_p(g)=0$ and $\lambda_p(g)=0$.
    \end{enumerate}
It then follows from Theorem \ref{main thm 1} that for $p\in \Sigma$, there are infinitely many Hecke newforms of weight $2$ such that 
 \begin{enumerate}
    \item $f$ has good ordinary reduction at $p$, 
    \item $\bar{\rho}_f$ is surjective, 
    \item $\mu_p(f)=0$, 
    \item $\lambda_p(f)=n$.
\end{enumerate}
Let $E_{/\Q}$ be any non-CM elliptic curve with Mordell-Weil rank $0$, and let $g$ be the associated Hecke newform. Consider the following observations.
\begin{itemize}
    \item The field of Fourier coefficients of $g$ is $\Q$, since it is associated to an elliptic curve over $\Q$. In particular, it follows that the residue field $\kappa$ is isomorphic to $\F_p$ for all primes $p$.
    \item The set of primes at which $E$ has good ordinary reduction has density $1$, by a result of Serre \cite{serre1981quelques}.
    \item Serre's open image theorem \cite{serre1972proprietes} shows that for all but finitely many primes $p$, the residual representation $\bar{\rho}_g:\op{Gal}(\bar{\Q}/\Q)\rightarrow \op{GL}_2(\F_p)$ is surjective. 
    \item If at a given prime $p$, $N_g$ is not optimal, then there must be a $p$-congruence between $g$ and a newform $h$ of weight $2$ and strictly smaller level. This follows from Ribet's level lowering theorem. There are only finitely many newforms of weight $2$ with level strictly less than $N_g$. Also, there are only finitely many primes $p$ for which two newforms are $p$-congruent. Therefore, for all but finitely many primes $p$, the level $N_g$ is optimal. 
    \item Finally, it follows from a theorem of Greenberg \cite[Theorems 4.1, 5.1]{Gre99} that for a density $1$ set of primes $p$, $\mu_p(g)=\lambda_p(g)=0$. \end{itemize}
    Therefore, there is a set of primes $\Sigma$ of density $1$ such that the above conditions are satisfied. This completes the proof.
\end{proof}

\bibliographystyle{alpha}
\bibliography{references}
\end{document}